\newcommand{\Met}{{\mathrm{SL}_2(\mathbb R)^{\sim}}}
\newcommand{\spacedcdot}{{\,\cdot\,}}
\newcommand{\bbC}{{\mathbb{C}}}
\newcommand{\bbR}{{\mathbb{R}}}
\newcommand{\bbZ}{{\mathbb{Z}}}
\newcommand{\calH}{{\mathcal{H}}}
\def\babs#1{\pmb{\left|\vphantom{#1}\right.} #1 \pmb{\left.\vphantom{#1}\right|}}
\def\SL#1{{\mathrm{SL}}_{#1}}
\providecommand{\abs}[1]{\left\lvert#1\right\rvert}
\providecommand{\scal}[2]{\left<#1,#2\right>}
\numberwithin{equation}{section}
\newtheorem{lemma}[equation]{Lemma}
\newtheorem{theorem}[equation] {Theorem}
\newtheorem{corollary}[equation]{Corollary}
\title
   [ Non-vanishing of $ L $-functions]
   { On the non-vanishing of $ L $-functions associated to cusp forms of half-integral weight}
\author{Sonja \v Zunar}
\address{ Department of Mathematics,
Faculty of Science,
University of Zagreb,
Bijeni\v cka 30,
10000 Zagreb,
Croatia}
\email{szunar@math.hr}
\subjclass[2010]{11F37}
\keywords{$ L $-functions, cusp forms of half-integral weight, non-vanishing of Poincar\' e series, metaplectic cover of $ \mathrm{SL}_2(\mathbb R) $}
\thanks{The author acknowledges Croatian Science Foundation Grant No.~9364.}
\begin{document}
\maketitle

\begin{abstract}
	We prove a strengthening of Mui\' c's integral non-vanishing criterion for Poincar\'e series on unimodular locally compact Hausdorff groups and use it to prove a result on non-vanishing of $ L $-functions associated to cusp forms of half-integral weight.
\end{abstract} 

\section{Introduction}

In \cite[Theorem 4-1]{muic2009construction}, G.~Mui\' c proved an integral criterion for non-vanishing of Poincar\'e series on unimodular locally compact Hausdorff groups. In \cite{MuicIJNT}, this criterion was refined (see \cite[Lemmas 2-1 and 3-1]{MuicIJNT}) and used to study the non-vanishing of classical Poincar\'e series. It found further applications in \cite{MuicJNT} (resp.,~in \cite{ZunarNonVan2017} and \cite{ZunarNonVan2018}), where it was used to study the non-vanishing of Poincar\'e series associated to certain $ K $-finite matrix coefficients of integrable representations of $ \SL2(\bbR) $ (resp., the metaplectic cover of $ \SL2(\bbR) $) and to prove results on non-vanishing of the corresponding cusp forms of integral (resp., of half-integral) weight. In \cite{MuicLFunk}, the criterion provided results on non-vanishing of $ L $-functions associated to cusp forms of integral weight. In the first part of this paper, we prove a strengthening of this criterion (Theorem \ref{thm:l034}) and derive from it a non-vanishing criterion for Poincar\'e series of half-integral weight on the upper half-plane $ \calH $ (Theorem \ref{thm:f098}). Our proofs are considerably shorter than the proofs of criteria in \cite{muic2009construction} and \cite{MuicIJNT} because we eliminate the need to approximate $ L^1 $-functions by continuous functions with compact support.

In the second part of the paper, we use techniques of \cite{MuicLFunk} and Theorem \ref{thm:f098} to prove results on analytic continuation and non-vanishing of $ L $-functions associated to cusp forms of half-integral weight. We work in the following setting. The group $ \SL2(\bbR) $ acts on $ \calH $ by linear fractional transformations:
\[ g.z:=\frac{az+b}{cz+d},\qquad g=\begin{pmatrix}a&b\\c&d\end{pmatrix}\in\SL2(\bbR),\ z\in\calH. \]
The metaplectic cover of $ \SL2(\bbR) $ is defined as the group
\begin{align}
	\Met:=\bigg\{&\sigma=\left(g_\sigma=\begin{pmatrix}a_\sigma&b_\sigma \\c_\sigma&d_\sigma\end{pmatrix},\eta_\sigma\right)\in\SL2(\bbR)\times\bbC^\calH:\nonumber\\
	&\eta_\sigma\text{ is holomorphic and }\eta_\sigma^2(z)=c_\sigma z+d_\sigma\text{ for all }z\in\calH\bigg\}\label{eq:t022}
\end{align}
with multiplication rule  
\begin{equation}\label{eq:t023}
	\sigma_1\sigma_2:=\left(g_{\sigma_1}g_{\sigma_2},\eta_{\sigma_1}(g_{\sigma_2}.z)\eta_{\sigma_2}(z)\right),\qquad \sigma_1,\sigma_2\in\Met.
\end{equation}
Let $ \Gamma $ be a discrete subgroup of finite covolume in $ \Met $ such that $ \infty $ is a cusp of $ P(\Gamma) $, where $ P:\Met\to\SL2(\bbR) $ is the projection onto the first coordinate. Let $ m\in\frac52+\bbZ_{\geq0} $, and let $ \chi:\Gamma\to\bbC^\times $ be a character of finite order such that $ \eta_\gamma^{-2m}=\chi(\gamma) $ for all $ \gamma\in\Gamma_\infty $. The standardly defined space $ S_m(\Gamma,\chi) $ of cusp forms (see Section \ref{sec:002}) is a finite-dimensional Hilbert space under the Petersson inner product $ \scal\spacedcdot\spacedcdot_{S_m(\Gamma,\chi)} $. Let us mention that the spaces $ S_m(N,\psi) $ from \cite{shimura}, where $ N\in4\bbZ_{>0} $ and $ \psi $ is a Dirichlet character modulo $ N $, are of this form (see \cite[\S9]{ZunarNonVan2018}).

For $ f\in S_m(\Gamma,\chi) $ with Fourier expansion $ f(z)=\sum_{n=1}^\infty a_n(f)e^{2\pi in\frac zh} $, the $ L $-function of $ f $ is defined by the formula
\[ L(s,f):=\sum_{n=1}^\infty\frac{a_n(f)}{n^s},\qquad \Re(s)>\frac m2+1. \]
In Section \ref{sec:f146}, we determine Poincar\'e series $ \Psi_{\Gamma,m,\chi,s}\in S_m(\Gamma,\chi) $ and a constant $ c_{\Gamma,m}\in\mathbb R_{>0} $ that depends only on $ \Gamma $ and $ m $, with the following properties: for $ \Re(s)>\frac m2+1 $, we have
\begin{equation}\label{eq:t018}
	L(s,f)=c_{\Gamma,m}\scal{f}{\Psi_{\Gamma,m,\chi,m-\overline s}}_{S_m(\Gamma,\chi)},\qquad f\in S_m(\Gamma,\chi), 
\end{equation}
and if $ m\in\frac92+\bbZ_{\geq0} $, then the formula \eqref{eq:t018} defines an 
analytic continuation of $ L(\spacedcdot,f) $ to the half-plane $ \Re(s)>\frac m2 $. This result is a half-integral weight version of \cite[Theorem 3-9 and (3-10)]{MuicLFunk}.

Next, we apply our non-vanishing criterion, Theorem \ref{thm:f098}, to prove a result on non-vanishing of Poincar\'e series $ \Psi_{\Gamma,m,\chi,m-\overline s} $ for $ \frac m2<\Re(s)<m-1 $. The main idea of the proof is the same as that of the proof of \cite[Lemma 4-2]{MuicLFunk}, but our proof uses stronger estimates which relax the final sufficient condition for the non-vanishing of cusp forms $ \Psi_{\Gamma,m,\chi,m-\overline s} $. The final result is given in Theorem \ref{thm:f121}.

Theorem \ref{thm:f121} translates via \eqref{eq:t018} into a sufficient condition for the inequality  
\[ L(s,\Psi_{\Gamma,m,\chi,m-\overline s})>0   \]
to hold (see Corollaries \ref{cor:l054} and \ref{cor:l055}). This result adds to a series of results on non-vanishing of $ L $-functions associated to certain cusp forms that was started by Kohnen's paper \cite{kohnen1997non}. In that paper, W.~Kohnen studied the non-vanishing in the critical strip of the sum of appropriately normalized Hecke $ L $-functions for $ S_m(\SL2(\bbZ)) $, where $ m\in\bbZ_{\geq4} $. His method is based on estimating the first Fourier coefficient of the cusp form that represents, in the sense of the Riesz representation theorem, the linear functional $ S_m(\SL2(\bbZ))\to\bbC $, $ f\mapsto c_{m,s}L(s,f) $, where $ c_{m,s}\in\bbC^\times $ depends only on $ m $ and $ s $. Kohnen's method inspired Mui\'c's work in \cite{MuicLFunk} and was adapted in \cite{raghuram2005non} to the case of cusp forms with non-trivial level and character, and in \cite{ramakrishnan2014non} and \cite{kohnen2017non} to the case of cusp forms of half-integral weight. \vspace{5mm}\\
This paper grew out of my PhD thesis. I would like to thank my advisor, Goran Mui\'c, for his encouragement, support, and many discussions. I would also like to thank Petar Baki\'c for some useful comments.

\section{Preliminaries}\label{sec:002}

Let $ \sqrt\spacedcdot:\bbC\to\bbC $ be the branch of the complex square root such that $ \arg\left(\sqrt z\right)\in\left]-\frac\pi2,\frac\pi2\right] $ for all $ z\in\bbC^\times $. We write $ i:=\sqrt{-1} $. 

The group $ \SL2(\bbR) $ acts on $ \bbC\cup\left\{\infty\right\} $ by linear fractional transformations:
\[ g.z:=\frac{az+b}{cz+d},\qquad g=\begin{pmatrix}a&b\\c&d\end{pmatrix}\in\SL2(\bbR),\ z\in\bbC\cup\left\{\infty\right\}. \]
The half-plane $ \calH:=\bbC_{\Im(z)>0} $ is an orbit of this action.
We note that
\begin{equation}\label{eq:t011}
	\Im(g.z)=\frac{\Im(z)}{\abs{cz+d}^2},\qquad g=\begin{pmatrix}a&b\\c&d\end{pmatrix}\in\SL2(\bbR),\ z\in\calH.
\end{equation}

We define the group $ \Met $ by formulae \eqref{eq:t022} and \eqref{eq:t023}.
We use shorthand notation $ \left(g_\sigma,\eta_\sigma(i)\right) $ for elements $ \sigma=\left(g_\sigma,\eta_\sigma\right) $ of $ \Met $ and define the smooth structure of $ \Met $ by requiring that the Iwasawa parametrization
$ \bbR\times\bbR_{>0}\times\bbR\to\Met $,
\begin{equation}\label{eq:001}
	(x,y,t)\mapsto\left(\begin{pmatrix}1&x\\0&1\end{pmatrix},1\right)\left(\begin{pmatrix}y^{\frac12}&0\\0&y^{-\frac12}\end{pmatrix},y^{-\frac14}\right)\left(\begin{pmatrix}\cos t&-\sin t\\\sin t&\cos t\end{pmatrix},e^{i\frac t2}\right),
\end{equation} 
be a local diffeomorphism. With this smooth structure, $ \Met $ is a connected Lie group, and the projection $ P:\Met\to\SL2(\bbR) $, $ P(\sigma):=g_\sigma $, is a smooth covering homomorphism of degree $ 2 $.
Next, let us denote the center of $ \Met $ by $ Z\left(\Met\right) $. For a discrete subgroup $ \Gamma $ of $ \Met $, we define
\[ Z(\Gamma):=\Gamma\cap Z\left(\Met\right)\qquad\text{and}\qquad \varepsilon_\Gamma:=\abs{Z(\Gamma)}. \]

The group $ \Met $ acts on $ \calH $ by the formula
\[ \sigma.z:=g_\sigma.z,\qquad\sigma\in\Met,\ z\in\calH.  \]
Let us denote the three factors on the right-hand side of \eqref{eq:001}, from left to right, by $ n_x $, $ a_y $, and $ \kappa_t $. We have
\begin{equation}\label{eq:t019}
	n_xa_y\kappa_t.i=x+iy,\qquad x\in\bbR,\ y\in\bbR_{>0},\ t\in\bbR. 
\end{equation}
The group $ K:=\left\{\sigma\in\Met:\sigma.i=i\right\}=\left\{\kappa_t:t\in\bbR\right\} $ is a maximal compact subgroup of $ \Met $. Its unitary dual consists of the characters $ \chi_n:K\to\bbC^\times $, $ n\in\frac12\bbZ $,
\[ \chi_n(\kappa_t):=e^{-int},\qquad t\in\bbR. \]

Let $ v $ be the $ \SL2(\bbR) $-invariant Radon measure on $ \calH $ defined by the formula 
\[ \int_\calH f\,dv=\int_\bbR\int_0^\infty f(x+iy)y^{-2}\,dy\,dx,\qquad f\in C_c(\calH). \]
We fix the following Haar measure on $ \Met $: 
\[ \int_{\Met}\varphi\,d\mu_{\Met}:=\frac1{4\pi}\int_0^{4\pi}\int_\calH\varphi(n_xa_y\kappa_t)\,dv(x+iy)\,dt \]
for all $ \varphi\in C_c\left(\Met\right) $.

Next, let $ G $ be a locally compact Hausdorff group that is second-countable and unimodular, with Haar measure $ \mu_G $. For a discrete subgroup $ \Gamma $ of $ G $, we denote by $ \mu_{\Gamma\backslash G} $ the unique Radon measure on $ \Gamma\backslash G $ such that
\[ \int_{\Gamma\backslash G}\sum_{\gamma\in\Gamma}\varphi(\gamma g)\,d\mu_{\Gamma\backslash G}(g)=\int_G\varphi\,d\mu_G,\qquad\varphi\in C_c(G). \]
We define the space $ L^1(\Gamma\backslash G) $ with respect to this measure. We note that if $ \Lambda $ is a subgroup of $ \Gamma $, then
\begin{equation}\label{eq:t001}
	\int_{\Gamma\backslash G}\sum_{\gamma\in\Lambda\backslash\Gamma}\varphi(\gamma g)\,d\mu_{\Gamma\backslash G}(g)=\int_{\Lambda\backslash G}\varphi\,d\mu_{\Lambda\backslash G},\qquad\varphi\in L^1(\Lambda\backslash G)
\end{equation}
(see \cite[(2-2)]{MuicIJNT}). In the case when $ G=\Met $ and $ \mu_G=\mu_\Met $, we have, for all $ \varphi\in C_c\left(\Gamma\backslash\Met\right) $,
\begin{equation}\label{eq:t009}
	\int_{\Gamma\backslash\Met}\varphi\,d\mu_{\Gamma\backslash\Met}=\frac1{4\pi\varepsilon_\Gamma}\int_0^{4\pi}\int_{\Gamma\backslash\calH}\varphi(n_xa_y\kappa_t)\,dv(x+iy)\,dt.
\end{equation}

For every $ m\in\frac12+\bbZ_{\geq0} $, we have the following right action of $ \Met $ on $ \bbC^\calH $:
\[ \left(f\big|_m\sigma\right)(z):=f(g_\sigma.z)\eta_\sigma(z)^{-2m},\qquad f\in\bbC^\calH,\ \sigma\in\Met,\ z\in\calH. \]
Let us note that for $ \delta\in Z\left(\Met\right)=P^{-1}(\left\{\pm I_2\right\})=\left\{\kappa_{n\pi}:n\in\bbZ\right\} $, we have
\begin{equation}\label{eq:t014}
	\eta_\delta^{-2m}(z)=\chi_m(\delta),\qquad z\in\calH,
\end{equation}
and
\begin{equation}\label{eq:t008}
	f\big|_m\delta=\chi_m(\delta)f,\qquad f\in\bbC^\calH.
\end{equation}

Next, let $ \Gamma $ be a discrete subgroup of finite covolume in $ \Met $, $ \chi:\Gamma\to\bbC^\times $ a character of finite order, and $ m\in\frac12+\bbZ_{\geq0} $. The space $ S_m(\Gamma,\chi) $ of cusp forms for $ \Gamma $ of weight $ m $ with character $ \chi $ by definition consists of holomorphic functions $ f:\calH\to\bbC $ that satisfy 
\begin{equation}\label{eq:t012}
	f\big|_m\gamma=\chi(\gamma)f,\qquad\gamma\in\Gamma,
\end{equation}
and vanish at all cusps of $ P(\Gamma) $ (see the beginning of \cite[\S3]{ZunarNonVan2018} for a detailed explanation of the last condition). $ S_m(\Gamma,\chi) $ is a finite-dimensional Hilbert space under the Petersson inner product
\[ \scal{f_1}{f_2}_{S_m(\Gamma,\chi)}:=\varepsilon_\Gamma^{-1}\int_{\Gamma\backslash\calH}f_1(z)\overline{f_2(z)}\Im(z)^m\,dv(z),\qquad f_1,f_2\in S_m(\Gamma,\chi). \]

For $ m\in\frac12+\bbZ_{\geq0} $, we introduce the classical lift of a function $ f:\calH\to\bbC $ to the function $ F_f:\Met\to\bbC $,
\[ F_f(\sigma):=\left(f\big|_m\sigma\right)(i). \]
Equivalently,
\begin{equation}\label{eq:t010}
	F_f(n_xa_y\kappa_t)=f(x+iy)y^{\frac m2}e^{-imt},\qquad x\in\bbR,\ y\in\bbR_{>0},\ t\in\bbR. 
\end{equation}
Let $ \Gamma $ be a discrete subgroup of $ \Met $ and $ \chi:\Gamma\to\bbC^\times $ a unitary character. One checks easily that for every $ f:\calH\to\bbC $, the following equivalence holds:
\begin{equation}\label{eq:t005}
	f\big|_m\gamma=\chi(\gamma)f,\quad \gamma\in\Gamma\qquad\Leftrightarrow\qquad F_f(\gamma\spacedcdot)=\chi(\gamma)F_f,\quad \gamma\in\Gamma.
\end{equation}
Moreover, if these equivalent conditions are satisfied and $ f $ is measurable, then we have 
\begin{equation}\label{eq:t006}
	\int_{\Gamma\backslash\Met}\abs{F_f}\,d\mu_{\Gamma\backslash\Met}\underset{\eqref{eq:t010}}{\overset{\eqref{eq:t009}}=}\varepsilon_\Gamma^{-1}\int_{\Gamma\backslash\calH}\abs{f(z)\Im(z)^{\frac m2}}\,dv(z).
\end{equation}

\section{A non-vanishing criterion for Poincar\'e series on locally compact Hausdorff groups}\label{subsec:f084}

The proof of the following lemma is given in the first part of \cite[proof of Lemma 2-1]{MuicIJNT}.

\begin{lemma}\label{lem:t005}
	Let $ G $ be a locally compact Hausdorff group that is second-countable and unimodular, with Haar measure $ \mu_G $. Let $ \Gamma $ be a discrete subgroup of $ G $, $ \Lambda $ a subgroup of $ \Gamma $, and $ \chi:\Gamma\to\bbC^\times $ a unitary character. Let $ \varphi:G\to\bbC $ be a measurable function with the following properties:
	\begin{enumerate}[label=\textup{(F\arabic*)},leftmargin=*,align=left]
		\item[{\textup{(F1)}}] $ \varphi(\lambda g)=\chi(\lambda)\varphi(g),\quad\lambda\in\Lambda,\ g\in G $
		\item[{\textup{(F2)}}]	$ \abs\varphi\in L^1(\Lambda\backslash G) $.	
	\end{enumerate}
	Then, we have the following:
	\begin{enumerate}[label=\textup{(\arabic*)},leftmargin=*,align=left]
		\item\label{lem:t005:1} The Poincar\'e series
		\[ \left(P_{\Lambda\backslash\Gamma,\chi}\varphi\right)(g):=\sum_{\gamma\in\Lambda\backslash\Gamma}\overline{\chi(\gamma)}\varphi(\gamma g),\qquad g\in G, \]
		converges absolutely almost everywhere on $ G $.
		\item\label{lem:t005:2} $ \left(P_{\Lambda\backslash\Gamma,\chi}\varphi\right)(\gamma \spacedcdot)=\chi(\gamma)P_{\Lambda\backslash\Gamma,\chi}\varphi,\quad\gamma\in\Gamma.  $
		\item\label{lem:t005:3} $ \abs{P_{\Lambda\backslash\Gamma,\chi}\varphi}\in L^1(\Gamma\backslash G) $.
	\end{enumerate}
\end{lemma}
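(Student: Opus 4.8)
The plan is to route all three claims through the single auxiliary function
\[ \Phi(g) := \sum_{\gamma\in\Lambda\backslash\Gamma}\abs{\varphi(\gamma g)},\qquad g\in G. \]
First I would check this is well posed: by \textup{(F1)} the function $\abs\varphi$ is left $\Lambda$-invariant, so each summand $\abs{\varphi(\gamma\spacedcdot)}$ depends only on the coset $\Lambda\gamma\in\Lambda\backslash\Gamma$; since $G$ is second-countable and $\Gamma$ discrete, $\Lambda\backslash\Gamma$ is countable, so $\Phi$ is a well-defined $[0,\infty]$-valued measurable function on $G$, manifestly left $\Gamma$-invariant and hence descending to $\Gamma\backslash G$. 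By the same computation, the summand $\overline{\chi(\gamma)}\varphi(\gamma\spacedcdot)$ defining $P_{\Lambda\backslash\Gamma,\chi}\varphi$ is well defined on $\Lambda\backslash\Gamma$: replacing $\gamma$ by $\lambda\gamma$ with $\lambda\in\Lambda$ multiplies it by $\overline{\chi(\lambda)}\chi(\lambda)=1$, using \textup{(F1)} and $\abs{\chi(\lambda)}=1$.

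The crux is the identity \eqref{eq:t001} applied to the pair $\Lambda\subseteq\Gamma$ and the function $\abs\varphi$, which lies in $L^1(\Lambda\backslash G)$ by \textup{(F2)}:
\[ \int_{\Gamma\backslash G}\Phi\,d\mu_{\Gamma\backslash G}=\int_{\Gamma\backslash G}\sum_{\gamma\in\Lambda\backslash\Gamma}\abs{\varphi(\gamma g)}\,d\mu_{\Gamma\backslash G}(g)\overset{\eqref{eq:t001}}{=}\int_{\Lambda\backslash G}\abs\varphi\,d\mu_{\Lambda\backslash G}<\infty. \]
Hence $\Phi\in L^1(\Gamma\backslash G)$, so $\Phi$ is finite $\mu_{\Gamma\backslash G}$-almost everywhere; pulling this back along the quotient map $G\to\Gamma\backslash G$, which carries $\mu_{\Gamma\backslash G}$-null sets to $\mu_G$-null sets (a standard consequence of the Weil integration formula), we get $\Phi(g)<\infty$ for $\mu_G$-a.e.\ $g\in G$. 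That is precisely the absolute convergence in \ref{lem:t005:1}, and it simultaneously yields the pointwise a.e.\ domination $\abs{P_{\Lambda\backslash\Gamma,\chi}\varphi}\le\Phi$.

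For \ref{lem:t005:2} I would fix $\gamma_0\in\Gamma$ and work at any $g$ where the series defining $P_{\Lambda\backslash\Gamma,\chi}\varphi$ converges absolutely at both $g$ and $\gamma_0 g$ (a.e.\ $g$, by \ref{lem:t005:1}); since right multiplication by $\gamma_0$ permutes the cosets of $\Lambda$ in $\Gamma$, I may reindex $\gamma\mapsto\gamma\gamma_0^{-1}$ and use $\overline{\chi(\gamma\gamma_0^{-1})}=\overline{\chi(\gamma)}\chi(\gamma_0)$ (again by unitarity of $\chi$) to obtain
\[ \left(P_{\Lambda\backslash\Gamma,\chi}\varphi\right)(\gamma_0 g)=\sum_{\gamma\in\Lambda\backslash\Gamma}\overline{\chi(\gamma)}\varphi(\gamma\gamma_0 g)=\chi(\gamma_0)\sum_{\gamma\in\Lambda\backslash\Gamma}\overline{\chi(\gamma)}\varphi(\gamma g)=\chi(\gamma_0)\left(P_{\Lambda\backslash\Gamma,\chi}\varphi\right)(g), \]
the rearrangement being legitimate by absolute convergence. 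Finally, \ref{lem:t005:3} is immediate: by \ref{lem:t005:2} the function $\abs{P_{\Lambda\backslash\Gamma,\chi}\varphi}$ is $\Gamma$-invariant almost everywhere, hence defines a measurable function on $\Gamma\backslash G$, and it is dominated a.e.\ by $\Phi\in L^1(\Gamma\backslash G)$, so $\abs{P_{\Lambda\backslash\Gamma,\chi}\varphi}\in L^1(\Gamma\backslash G)$.

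I do not expect a genuine obstacle here; the argument is the classical unfolding for Poincaré series. The only points needing care are the measure-theoretic bookkeeping — measurability of $\Phi$, the well-definedness of the summands on the coset space $\Lambda\backslash\Gamma$, and the transfer between "almost everywhere on $\Gamma\backslash G$" and "almost everywhere on $G$" — together with invoking \eqref{eq:t001} in the stated generality for the subgroup chain $\Lambda\subseteq\Gamma$.
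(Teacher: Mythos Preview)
Your argument is correct and is precisely the classical unfolding argument; the paper does not give its own proof of this lemma but simply refers to \cite[proof of Lemma 2-1]{MuicIJNT}, whose first part proceeds exactly as you do---introduce the majorant $\Phi$, apply \eqref{eq:t001} to identify $\int_{\Gamma\backslash G}\Phi\,d\mu_{\Gamma\backslash G}$ with $\norm{\abs\varphi}_{L^1(\Lambda\backslash G)}$, and read off all three claims. Your careful attention to well-definedness on cosets and to the transfer of null sets between $G$ and $\Gamma\backslash G$ is appropriate and sufficient.
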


The following theorem is a strengthening of the integral non-vanishing criterion \cite[Lemma 2-1]{MuicIJNT}.

\begin{theorem}\label{thm:l034}
	Let $ G $, $ \Gamma $, $ \Lambda $, $ \chi $, and $ \varphi $ satisfy the assumptions of Lemma \ref{lem:t005}. Then, 
	\[ \int_{\Gamma\backslash G}\abs{P_{\Lambda\backslash\Gamma,\chi}\varphi}\,d\mu_{\Gamma\backslash G}>0  \]
	if there exists a Borel measurable set $ C\subseteq G $ with the following properties:
	\begin{enumerate}[label=\textup{(C\arabic*)},leftmargin=*,align=left]
		\item\label{enum:l034:1} $ CC^{-1}\cap\Gamma\subseteq\Lambda $.
		\item\label{enum:l034:2} Writing $ (\Lambda C)^c:=G\setminus\Lambda C $, we have
		\begin{equation}\label{eq:l040}
			\int_{\Lambda\backslash\Lambda C}\babs \varphi\,d\mu_{\Lambda\backslash G}>\int_{\Lambda\backslash(\Lambda C)^c}\babs \varphi\,d\mu_{\Lambda\backslash G}
		\end{equation}
		for some measurable function $ \babs\spacedcdot:\bbC\to\bbR_{\geq0} $ with the following properties:
		\begin{enumerate}[label=\textup{(M\arabic*)},leftmargin=*,align=left]
			\item\label{enum:l036:1} $ \babs0=0 $.
			\item\label{enum:l036:2} $ \babs z=\babs{\abs z},\quad z\in \bbC $.
			\item\label{enum:l036:3} $ \babs{\sum_{n=1}^\infty z_n}\leq\sum_{n=1}^\infty\babs{z_n} $ for every $ (z_n)_{n\in\bbZ_{>0}}\subseteq\bbC $ such that $ \sum_{n=1}^\infty\abs{z_n}<\infty $.
		\end{enumerate} 
	\end{enumerate}
\end{theorem}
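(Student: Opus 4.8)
\emph{Plan.} I would argue by contradiction, using two ingredients: the multiplicity‑one consequence of \ref{enum:l034:1}, and an unfolding of a partial Poincar\'e sum against the measure~\eqref{eq:t001}. Suppose $\int_{\Gamma\backslash G}\abs{P_{\Lambda\backslash\Gamma,\chi}\varphi}\,d\mu_{\Gamma\backslash G}=0$. By part~\ref{lem:t005:3} of Lemma~\ref{lem:t005} the integrand lies in $L^1(\Gamma\backslash G)$, hence vanishes $\mu_{\Gamma\backslash G}$‑a.e.; being left $\Gamma$‑invariant, the set where it is nonzero is $\Gamma$‑invariant with null image in $\Gamma\backslash G$, so it is $\mu_G$‑null and $P_{\Lambda\backslash\Gamma,\chi}\varphi=0$ $\mu_G$‑a.e. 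Splitting off the identity coset $\Lambda\in\Lambda\backslash\Gamma$, whose contribution to the series is $\varphi(g)$, we obtain for $\mu_G$‑a.e.\ $g$
\[
\varphi(g)=-\sum_{\gamma\in(\Lambda\backslash\Gamma)\setminus\{\Lambda\}}\overline{\chi(\gamma)}\,\varphi(\gamma g),
\]
the series converging absolutely by part~\ref{lem:t005:1} of Lemma~\ref{lem:t005}. Applying $\babs{\spacedcdot}$, using $\abs{\overline{\chi(\gamma)}}=1$ together with \ref{enum:l036:2} and \ref{enum:l036:3} (and \ref{enum:l036:1} in the case $\varphi(g)=0$), this gives the pointwise estimate
\[
\babs{\varphi(g)}\le\sum_{\gamma\in(\Lambda\backslash\Gamma)\setminus\{\Lambda\}}\babs{\varphi(\gamma g)}=:S(g)\qquad(\mu_G\text{-a.e.\ }g).
\]
One checks that $\babs{\varphi(\gamma g)}$ depends only on the coset $\Lambda\gamma$ and that $\babs\varphi$ and $S$ are left $\Lambda$‑invariant, so they descend to $\Lambda\backslash G$.

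Next I would record the multiplicity‑one statement from \ref{enum:l034:1}: for every $g\in G$,
\[
\sum_{\gamma\in\Lambda\backslash\Gamma}\mathbf 1_{\Lambda C}(\gamma g)\le1,
\]
where $\mathbf 1_{\Lambda C}$ is the characteristic function of $\Lambda C$ (well defined on $\Lambda\backslash\Gamma$ since $\Lambda C$ is left $\Lambda$‑invariant). Indeed, if $\gamma_1g,\gamma_2g\in\Lambda C$ write $\gamma_ig=\lambda_ic_i$ with $\lambda_i\in\Lambda$, $c_i\in C$; then $c_1c_2^{-1}=\lambda_1^{-1}\gamma_1\gamma_2^{-1}\lambda_2\in CC^{-1}\cap\Gamma\subseteq\Lambda$, so $\gamma_1\gamma_2^{-1}\in\Lambda$, i.e.\ $\Lambda\gamma_1=\Lambda\gamma_2$.

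Multiplying the pointwise estimate by $\mathbf 1_{\Lambda C}$ and integrating over $\Lambda\backslash G$ gives
\[
\int_{\Lambda\backslash\Lambda C}\babs\varphi\,d\mu_{\Lambda\backslash G}\le\int_{\Lambda\backslash G}\mathbf 1_{\Lambda C}\,S\,d\mu_{\Lambda\backslash G}.
\]
To bound the right‑hand side I would unfold via \eqref{eq:t001} (applied to the nonnegative measurable function $\mathbf 1_{\Lambda C}S$; the quotient‑integral formula extends from $C_c$ to such functions by monotone convergence), obtaining $\int_{\Gamma\backslash G}\sum_{\gamma_0\in\Lambda\backslash\Gamma}\mathbf 1_{\Lambda C}(\gamma_0g)\,S(\gamma_0g)\,d\mu_{\Gamma\backslash G}(g)$. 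Reindexing yields $S(\gamma_0g)=\sum_{\delta\in(\Lambda\backslash\Gamma)\setminus\{\Lambda\gamma_0\}}\babs{\varphi(\delta g)}$; interchanging the two sums of nonnegative terms and using the multiplicity‑one bound in the form
\[
\sum_{\substack{\gamma_0\in\Lambda\backslash\Gamma\\ \Lambda\gamma_0\ne\Lambda\delta}}\mathbf 1_{\Lambda C}(\gamma_0g)=\sum_{\gamma_0\in\Lambda\backslash\Gamma}\mathbf 1_{\Lambda C}(\gamma_0g)-\mathbf 1_{\Lambda C}(\delta g)\le\mathbf 1_{(\Lambda C)^c}(\delta g)
\]
gives $\sum_{\gamma_0}\mathbf 1_{\Lambda C}(\gamma_0g)S(\gamma_0g)\le\sum_{\delta\in\Lambda\backslash\Gamma}\bigl(\mathbf 1_{(\Lambda C)^c}\babs\varphi\bigr)(\delta g)$. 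Integrating over $\Gamma\backslash G$ and unfolding once more by \eqref{eq:t001} yields $\int_{\Lambda\backslash G}\mathbf 1_{\Lambda C}S\,d\mu_{\Lambda\backslash G}\le\int_{\Lambda\backslash(\Lambda C)^c}\babs\varphi\,d\mu_{\Lambda\backslash G}$. Combined with the previous display this forces
\[
\int_{\Lambda\backslash\Lambda C}\babs\varphi\,d\mu_{\Lambda\backslash G}\le\int_{\Lambda\backslash(\Lambda C)^c}\babs\varphi\,d\mu_{\Lambda\backslash G},
\]
contradicting \ref{enum:l034:2}. Since every step is an inequality between quantities in $[0,\infty]$, no integrability of $\babs\varphi$ is needed.

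The step I expect to be the main obstacle is this unfolding of $\int_{\Lambda\backslash G}\mathbf 1_{\Lambda C}S\,d\mu_{\Lambda\backslash G}$: the partial sum $S$ is only left $\Lambda$‑invariant, not left $\Gamma$‑invariant, so it cannot simply be pulled out of the sum over $\Lambda\backslash\Gamma$. The key is to keep everything as sums of nonnegative terms — so that Tonelli applies and no $\infty-\infty$ ambiguities arise — then reindex $S(\gamma_0\,\cdot\,)$ so that precisely the $\Lambda\gamma_0$‑term is dropped, and finally feed in $\sum_\gamma\mathbf 1_{\Lambda C}(\gamma g)\le1$. (The more direct route of bounding $\babs{\varphi(g)}\le\tfrac12\sum_{\gamma\in\Lambda\backslash\Gamma}\babs{\varphi(\gamma g)}$ and integrating leads to an inequality $2a\le a+b$ in which one must subtract a possibly infinite $a$, and so only settles the case $\int_{\Lambda\backslash\Lambda C}\babs\varphi<\infty$.) The remaining verifications — that the relevant functions descend to the correct quotients, and that \eqref{eq:t001} is valid for nonnegative measurable integrands — are routine.
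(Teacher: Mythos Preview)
Your argument is correct: the contradiction hypothesis, the pointwise estimate $\babs{\varphi}\le S$ via (M2)--(M3), the double unfolding with reindexing, and the key bound $\sum_{\gamma_0\ne\delta}\mathbf 1_{\Lambda C}(\gamma_0 g)\le\mathbf 1_{(\Lambda C)^c}(\delta g)$ all go through. The paper, however, organizes the same ingredients differently and more directly. Rather than assume $P_{\Lambda\backslash\Gamma,\chi}\varphi=0$ and isolate the identity coset, it splits the \emph{function}: writing $\varphi=\varphi\cdot\mathbbm 1_{\Lambda C}+\varphi\cdot\mathbbm 1_{(\Lambda C)^c}$, it shows (using multiplicity one and a single unfolding each) that
\[
\int_{\Gamma\backslash G}\babs{P_{\Lambda\backslash\Gamma,\chi}(\varphi\cdot\mathbbm 1_{\Lambda C})}=\int_{\Lambda\backslash\Lambda C}\babs\varphi,
\qquad
\int_{\Gamma\backslash G}\babs{P_{\Lambda\backslash\Gamma,\chi}(\varphi\cdot\mathbbm 1_{(\Lambda C)^c})}\le\int_{\Lambda\backslash(\Lambda C)^c}\babs\varphi,
\]
and then applies the reverse triangle inequality $\babs{z+w}\ge\babs z-\babs w$ (which follows from (M2)--(M3)) to get the explicit lower bound
\[
\int_{\Gamma\backslash G}\babs{P_{\Lambda\backslash\Gamma,\chi}\varphi}\ \ge\ \int_{\Lambda\backslash\Lambda C}\babs\varphi-\int_{\Lambda\backslash(\Lambda C)^c}\babs\varphi\ >\ 0.
\]
This sidesteps precisely the step you flagged as the main obstacle---the unfolding of $\int\mathbf 1_{\Lambda C}S$ with its swap of sums and reindexing---and in addition yields a quantitative lower bound rather than bare non-vanishing. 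Your route, on the other hand, never introduces the auxiliary Poincar\'e series of the truncated pieces $\varphi\cdot\mathbbm 1_{\Lambda C}$ and $\varphi\cdot\mathbbm 1_{(\Lambda C)^c}$, working only with the original series; the price is the more delicate combinatorial manipulation you carried out.
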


\begin{proof}
	Suppose that a Borel measurable set $ C\subseteq G $ satisfies (C1) and (C2). Let us denote by $ \mathbbm1_A $ the characteristic function of $ A\subseteq G $. The property (C1) implies the following:
	\begin{equation}\label{eq:l035}
		\text{for every }g\in G,\ \#\left\{\gamma\in\Lambda\backslash\Gamma:\mathbbm1_{\Lambda C}(\gamma g)\neq0\right\}\leq1. 
	\end{equation}
	Namely, if $ \mathbbm1_{\Lambda C}(\gamma g)\neq0 $ and $ \mathbbm1_{\Lambda C}\left(\gamma' g\right)\neq0 $ for some $ \gamma,\gamma'\in\Gamma $, i.e., if $ \gamma g,\gamma'g\in\Lambda C $, then $ \gamma\gamma'^{-1}=(\gamma g)\left(\gamma'g\right)^{-1}\in\Lambda CC^{-1}\Lambda\cap\Gamma=\Lambda\left(CC^{-1}\cap\Gamma\right)\Lambda\overset{\text{(C1)}}=\Lambda $, hence $ \Lambda\gamma=\Lambda\gamma' $.
	
	Now we have
	\begin{align}
		&\int_{\Gamma\backslash G}\babs{P_{\Lambda\backslash\Gamma,\chi}\left(\varphi\cdot\mathbbm1_{\Lambda C}\right)}\,d\mu_{\Gamma\backslash G}\nonumber\\
		&\underset{\phantom{\text{(M1),(M2)}}}=\int_{\Gamma\backslash G}\babs{\sum_{\gamma\in\Lambda\backslash\Gamma}\overline{\chi(\gamma)}\varphi(\gamma g)\mathbbm1_{\Lambda C}(\gamma g)}\,d\mu_{\Gamma\backslash G}(g)\nonumber\\
		&\underset{\text{(M1),(M2)}}{\overset{\eqref{eq:l035}}=}\int_{\Gamma\backslash G}\sum_{\gamma\in\Lambda\backslash\Gamma}\babs{\varphi(\gamma g)\mathbbm1_{\Lambda C}(\gamma g)}\,d\mu_{\Gamma\backslash G}(g)\nonumber\\
		&\underset{\phantom{\text{(M1),(M2)}}}{\overset{\eqref{eq:t001}}=}\int_{\Lambda\backslash G}\babs{\varphi\cdot\mathbbm1_{\Lambda C}}\,d\mu_{\Lambda\backslash G}\nonumber\\
		&\underset{\phantom{\text{(M1),(M2)}}}{\overset{\text{(M1)}}=}\int_{\Lambda\backslash\Lambda C}\babs \varphi\,d\mu_{\Lambda\backslash G}.\label{eq:l038}
	\end{align}
	
	On the other hand,
	\begin{align}
		&\int_{\Gamma\backslash G}\babs{P_{\Lambda\backslash\Gamma,\chi}\left(\varphi\cdot\mathbbm1_{\left(\Lambda C\right)^c}\right)}\,d\mu_{\Gamma\backslash G}\nonumber\\
		&\underset{\phantom{\text{(M2)}}}=\int_{\Gamma\backslash G}\babs{\sum_{\gamma\in\Lambda\backslash\Gamma}\overline{\chi(\gamma)}\varphi(\gamma g)\mathbbm1_{\left(\Lambda C\right)^c}(\gamma g)}\,d\mu_{\Gamma\backslash G}(g)\nonumber\\
		&\underset{\text{(M2)}}{\overset{\text{(M3)}}\leq}\int_{\Gamma\backslash G}\sum_{\gamma\in\Lambda\backslash\Gamma}\babs{\varphi(\gamma g)\mathbbm1_{\left(\Lambda C\right)^c}(\gamma g)}\,d\mu_{\Gamma\backslash G}(g)\nonumber\\	
		&\underset{\phantom{\text{(M2)}}}{\overset{\eqref{eq:t001}}=}\int_{\Lambda\backslash G}\babs{\varphi\cdot\mathbbm1_{\left(\Lambda C\right)^c}}\,d\mu_{\Lambda\backslash G}\nonumber\\
		&\overset{\text{(M1)}}=\int_{\Lambda\backslash\left(\Lambda C\right)^c}\babs \varphi\,d\mu_{\Lambda\backslash G}.\label{eq:l039}
	\end{align}
	
	We also note that
	\begin{equation}\label{eq:l037}
		\babs{z+w}\overset{\text{(M3)}}\geq\babs z-\babs{-w}\overset{\text{(M2)}}=\babs z-\babs w,\qquad z,w\in\bbC.
	\end{equation}
	
	Now we have
	\begin{align*}
		&\int_{\Gamma\backslash G}\babs{P_{\Lambda\backslash\Gamma,\chi}\varphi}\,d\mu_{\Gamma\backslash G}\\
		&\overset{\eqref{eq:l037}}\geq\int_{\Gamma\backslash G}\babs{P_{\Lambda\backslash\Gamma,\chi}\left(\varphi\cdot\mathbbm1_{\Lambda C}\right)}\,d\mu_{\Gamma\backslash G}
		-\int_{\Gamma\backslash G}\babs{P_{\Lambda\backslash\Gamma,\chi}\left(\varphi\cdot\mathbbm1_{(\Lambda C)^c}\right)}\,d\mu_{\Gamma\backslash G}\\
		&\overset{\eqref{eq:l038}}{\underset{\eqref{eq:l039}}\geq}\int_{\Lambda\backslash\Lambda C}\babs{\varphi}\,d\mu_{\Lambda\backslash G}-\int_{\Lambda\backslash(\Lambda C)^c}\babs{\varphi}\,d\mu_{\Lambda\backslash G}\\
		&\overset{\eqref{eq:l040}}>0,
	\end{align*}
	from which it follows by (M1) that $ \abs{P_{\Lambda\backslash\Gamma,\chi}\varphi}\not\equiv0 $ in $ L^1(\Gamma\backslash G) $, hence 
	\[ \int_{\Gamma\backslash G}\abs{P_{\Lambda\backslash\Gamma,\chi}\varphi}\,d\mu_{\Gamma\backslash G}>0.\qedhere \]
\end{proof}

In the case when $ \babs\spacedcdot=\abs\spacedcdot:\bbC\to\bbR_{\geq0} $, Theorem \ref{thm:l034} is a variant of \cite[Lemma 2-1]{MuicIJNT}. More generally, we have the following lemma.

\begin{lemma}
	Let $ f:\bbR_{\geq0}\to\bbR_{\geq0} $ such that $ f(0)=0 $. Suppose that $ f $ is concave, i.e., that
	\[ f\big((1-t)x+ty\big)\geq(1-t)f(x)+tf(y),\qquad t\in[0,1],\ x,y\in\bbR_{\geq0}. \]
	Then, the function $ \babs\spacedcdot:\bbC\to\bbR_{\geq0} $,
	\[ \babs z:=f\left(\abs{z}\right), \]
	is measurable and has the properties \textup{(M1)}--\textup{(M3)} of Theorem \ref{thm:l034}.
\end{lemma}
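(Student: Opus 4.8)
The plan is to verify the three properties (M1)--(M3) directly from concavity and the normalization $f(0)=0$, after first recording the elementary fact that such an $f$ is nondecreasing and subadditive. Measurability of $\babs{\spacedcdot}$ is immediate: a concave function on $\bbR_{\geq0}$ is continuous on $\bbR_{>0}$, hence Borel, and $z\mapsto\abs z$ is continuous, so the composition $z\mapsto f(\abs z)$ is Borel measurable. Property (M1) is just $\babs 0=f(\abs 0)=f(0)=0$, and (M2) is the tautology $\babs z=f(\abs z)=f\bigl(\abs{\abs z}\bigr)=\babs{\abs z}$, since $\abs{\spacedcdot}$ already factors through itself.

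The substance is (M3), and the key preliminary step is to show that $f$ is nondecreasing and subadditive on $\bbR_{\geq0}$. For monotonicity: given $0\le x\le y$, write $x=(1-t)\cdot 0+t\cdot y$ with $t=x/y\in[0,1]$ (the case $y=0$ being trivial), so concavity together with $f(0)=0$ gives $f(x)\ge(1-t)f(0)+tf(y)=tf(y)\ge0$; more to the point, to get $f(x)\le f(y)$ I would instead argue that $t\mapsto f(t)/t$ is nonincreasing on $\bbR_{>0}$ (a standard consequence of concavity plus $f(0)=0$), whence for $0<x\le y$ we have $f(x)=x\cdot\frac{f(x)}{x}\ge x\cdot\frac{f(y)}{y}$, and combined with $f(y)\ge0$ this is not quite monotonicity — so the cleanest route is: $f$ concave on an interval with $f\ge 0$ at the left endpoint forces $f$ nondecreasing is false in general, but $f$ concave with domain $[0,\infty)$ and $f\ge0$ everywhere does imply $f$ nondecreasing, because if $f(x)>f(y)$ for some $x<y$ then by concavity $f$ would be strictly decreasing on $[x,\infty)$ at a rate bounded below, eventually becoming negative, contradicting $f\ge0$. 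For subadditivity: for $x,y>0$, concavity gives $f(x)=f\!\left(\frac{x}{x+y}(x+y)+\frac{y}{x+y}\cdot 0\right)\ge\frac{x}{x+y}f(x+y)$ and similarly $f(y)\ge\frac{y}{x+y}f(x+y)$; adding yields $f(x)+f(y)\ge f(x+y)$, and the boundary cases with a zero argument are trivial from $f(0)=0$.

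With monotonicity and subadditivity in hand, (M3) follows in two moves. Given $(z_n)$ with $\sum_n\abs{z_n}<\infty$, the triangle inequality in $\bbC$ gives $\abs{\sum_{n=1}^\infty z_n}\le\sum_{n=1}^\infty\abs{z_n}$, so monotonicity of $f$ yields $\babs{\sum_{n=1}^\infty z_n}=f\!\left(\abs{\sum_{n=1}^\infty z_n}\right)\le f\!\left(\sum_{n=1}^\infty\abs{z_n}\right)$. It then remains to show $f\!\left(\sum_{n=1}^\infty\abs{z_n}\right)\le\sum_{n=1}^\infty f(\abs{z_n})$. By induction, finite subadditivity gives $f\!\left(\sum_{n=1}^N\abs{z_n}\right)\le\sum_{n=1}^N f(\abs{z_n})\le\sum_{n=1}^\infty f(\abs{z_n})$ for every $N$; letting $N\to\infty$ and using continuity of $f$ at the point $\sum_{n=1}^\infty\abs{z_n}\in\bbR_{\ge0}$ (continuity on $\bbR_{>0}$ from concavity, and right-continuity at $0$ which can be arranged or handled separately when the sum is $0$) gives $f\!\left(\sum_{n=1}^\infty\abs{z_n}\right)\le\sum_{n=1}^\infty f(\abs{z_n})$, which is (M3).

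The main obstacle is the monotonicity claim: it genuinely uses that the domain is the full half-line $[0,\infty)$ and that $f$ takes values in $[0,\infty)$ — a concave function on a bounded interval need not be nondecreasing, so one must exploit that a concave function which ever strictly decreases does so with a slope that stays bounded away from $0$ from then on, forcing eventual negativity and contradicting $f\ge 0$. I would phrase this via the three-slopes (or increasing-chords) characterization of concavity: for $0\le a<b<c$ one has $\frac{f(b)-f(a)}{b-a}\ge\frac{f(c)-f(b)}{c-b}$, so if $f(b)<f(a)$ for some $a<b$ then for all $c>b$, $f(c)\le f(b)+\frac{f(b)-f(a)}{b-a}(c-b)\to-\infty$, contradicting $f(c)\ge0$. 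A secondary (minor) point is continuity of $f$ at $0$: concavity only gives upper semicontinuity there, but since we only ever evaluate $f$ at a convergent increasing sequence, monotone convergence of $f$ along an increasing sequence to $\sup_N f\!\left(\sum_{n\le N}\abs{z_n}\right)\le f\!\left(\sum_n\abs{z_n}\right)$ is all that is actually needed and follows from monotonicity alone, sidestepping any continuity subtlety.
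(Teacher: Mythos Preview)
Your argument is correct and follows essentially the same route as the paper: establish that $f$ is nondecreasing and subadditive, then derive (M3) by combining monotonicity with finite subadditivity and passing to the limit via continuity of $f$ on $\bbR_{>0}$. The only difference is that the paper outsources monotonicity and subadditivity to a reference, whereas you prove them directly (your three-slopes argument for monotonicity and the convex-combination argument for subadditivity are both fine, despite the meandering presentation).

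One small correction: your closing remark that monotonicity alone ``sidesteps any continuity subtlety'' is not right. Monotonicity gives $\sup_N f\bigl(\sum_{n\le N}\abs{z_n}\bigr)\le f\bigl(\sum_n\abs{z_n}\bigr)$, but to conclude $f\bigl(\sum_n\abs{z_n}\bigr)\le\sum_n f(\abs{z_n})$ from the finite inequalities you need the reverse bound $f\bigl(\sum_n\abs{z_n}\bigr)\le\sup_N f\bigl(\sum_{n\le N}\abs{z_n}\bigr)$, i.e.\ left-continuity at the limit point, which genuinely requires the continuity coming from concavity (and is exactly what both you, earlier, and the paper invoke).
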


\begin{proof}
	We note that $ f $ is subadditive and non-decreasing by \cite[Theorem 2]{zhi2016concave}. Next, we prove that $ \babs\spacedcdot $ satisfies (M3), the other claims being trivial. 
	
	First, let us show that for every $ (x_n)_{n\in\bbZ_{>0}}\subseteq\bbR_{\geq0} $ such that $ \sum_{n=1}^\infty x_n<\infty $, we have
	\begin{equation}\label{eq:t002}
		f\left(\sum_{n=1}^\infty x_n\right)\leq\sum_{n=1}^\infty f(x_n).
	\end{equation}
	This holds trivially if $ \sum_{n=1}^\infty x_n=0 $, so suppose that $ \sum_{n=1}^\infty x_n>0 $. By subadditivity, we have $ f\left(\sum_{n=1}^N x_n\right)\leq \sum_{n=1}^Nf(x_n) $ for all $ N\in\bbZ_{>0} $, from which  \eqref{eq:t002} follows by applying $ \lim_{N\to\infty} $ since by concavity $ f $ is continuous on $ \bbR_{>0} $ \cite[Theorem 1.3.3]{niculescu2006convex}.
	
	Now, for every $ (z_n)_{n\in\bbZ_{>0}}\subseteq\bbC $ such that $ \sum_{n=1}^\infty\abs{z_n}<\infty $, we have
	\[ \babs{\sum_{n=1}^\infty z_n}=f\left(\abs{\sum_{n=1}^\infty z_n}\right)\leq f\left(\sum_{n=1}^\infty\abs{z_n}\right)\overset{\eqref{eq:t002}}\leq\sum_{n=1}^\infty f\left(\abs{z_n}\right)=\sum_{n=1}^\infty\babs{z_n}, \]
	where the first inequality holds because $ f $ is non-decreasing.
\end{proof}

\section{A non-vanishing criterion for Poincar\'e series of half-integral weight}

\begin{lemma}\label{lem:t007}
	Let $ \Gamma $ be a discrete subgroup of $ \Met $, $ \Lambda $ a subgroup of $ \Gamma $, $ \chi:\Gamma\to\bbC^\times $ a unitary character, and $ m\in\frac12+\bbZ_{\geq0} $. Let $ f:\calH\to\bbC $ be a measurable function with the following properties:
	\begin{enumerate}[label=\textup{(f\arabic*)},leftmargin=*,align=left]
		\item $ f\big|_m\lambda=\chi(\lambda)f $,\quad$ \lambda\in\Lambda $.
		\item $ \int_{\Lambda\backslash\calH}\abs{f(z)\Im(z)^{\frac m2}}\,dv(z)<\infty $.
	\end{enumerate}
	Then, we have the following:
	\begin{enumerate}[label=\textup{(\arabic*)},leftmargin=*,align=left]
		\item\label{lem:t007:1} The series
		\[ P_{\Lambda\backslash\Gamma,\chi}f:=\sum_{\gamma\in\Lambda\backslash\Gamma}\overline{\chi(\gamma)}f\big|_m\gamma \]
		converges absolutely almost everywhere on $ \calH $.
		\item\label{lem:t007:2} $ \left(P_{\Lambda\backslash\Gamma,\chi}f\right)\big|_m\gamma=\chi(\gamma)P_{\Lambda\backslash\Gamma,\chi}f,\quad\gamma\in\Gamma $.
		\item\label{lem:t007:3} $ \int_{\Gamma\backslash\calH}\abs{\left(P_{\Lambda\backslash\Gamma,\chi}f\right)(z)\Im(z)^{\frac m2}}\,dv(z)=\varepsilon_\Gamma\int_{\Gamma\backslash\Met}\abs{{P_{\Lambda\backslash\Gamma,\chi}F_f}}\,d\mu_{\Gamma\backslash\Met}$ $<\infty $.
	\end{enumerate}
\end{lemma}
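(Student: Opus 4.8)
The plan is to deduce all three assertions from Lemma \ref{lem:t005} applied to the classical lift $\varphi := F_f$, using the dictionary between functions on $\calH$ and functions on $\Met$ recorded in \eqref{eq:t005}--\eqref{eq:t010}. First note that $\Lambda$, being a subgroup of the discrete group $\Gamma$, is itself discrete, so the disintegration formula \eqref{eq:t009}, and hence also \eqref{eq:t006}, holds with $\Lambda$ and $\varepsilon_\Lambda$ in place of $\Gamma$ and $\varepsilon_\Gamma$ (those arguments are local and use nothing but discreteness). Consequently, by \eqref{eq:t005} applied to $\Lambda$, hypothesis (f1) says precisely that $F_f(\lambda\spacedcdot) = \chi(\lambda)F_f$ for $\lambda\in\Lambda$, i.e.\ property (F1); and by \eqref{eq:t006} applied to $\Lambda$, hypothesis (f2) gives $\int_{\Lambda\backslash\Met}\abs{F_f}\,d\mu_{\Lambda\backslash\Met} = \varepsilon_\Lambda^{-1}\int_{\Lambda\backslash\calH}\abs{f(z)\Im(z)^{m/2}}\,dv(z) < \infty$, i.e.\ property (F2). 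Hence Lemma \ref{lem:t005} applies to $\varphi = F_f$.

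The bridge between the two Poincar\'e-series constructions is the identity $F_f(\gamma\sigma) = F_{f|_m\gamma}(\sigma)$ for $\gamma\in\Gamma$ and $\sigma\in\Met$, which is immediate from the definition $F_h(\sigma) = (h\big|_m\sigma)(i)$ together with the fact that $\big|_m$ is a right action. Since each operator $\big|_m\sigma$ is linear and acts pointwise on $\bbC^\calH$, it commutes with absolutely convergent series of functions; combining this with the previous identity shows that, on the set of absolute convergence, $F_{P_{\Lambda\backslash\Gamma,\chi}f} = P_{\Lambda\backslash\Gamma,\chi}F_f$. Moreover, writing $\sigma = n_xa_y\kappa_t$, equation \eqref{eq:t010} gives $\abs{F_f(\gamma\sigma)} = \abs{(f\big|_m\gamma)(x+iy)}\,y^{m/2}$, which is independent of $t$; hence $\sum_{\gamma\in\Lambda\backslash\Gamma}\overline{\chi(\gamma)}\,f\big|_m\gamma$ converges absolutely at $z = x+iy$ if and only if $P_{\Lambda\backslash\Gamma,\chi}F_f$ converges absolutely at $n_xa_y\kappa_t$. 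In particular the divergence locus of $P_{\Lambda\backslash\Gamma,\chi}F_f$ in $\Met$ is right-$K$-invariant, so by the Haar-measure formula for $\mu_\Met$ its $\mu_\Met$-measure equals the $v$-measure of the divergence locus of $P_{\Lambda\backslash\Gamma,\chi}f$ in $\calH$; the former is null by Lemma \ref{lem:t005}\ref{lem:t005:1}, hence so is the latter, which is \ref{lem:t007:1}.

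For \ref{lem:t007:2}, the simplest route is to argue directly: the summand $\overline{\chi(\gamma)}\,f\big|_m\gamma$ is well defined on $\Lambda\backslash\Gamma$ (by (f1) and $\abs\chi\equiv1$), and for $\gamma_0\in\Gamma$ the bijection $\gamma\mapsto\gamma\gamma_0$ of $\Lambda\backslash\Gamma$ yields, wherever the series converges absolutely (a.e., by \ref{lem:t007:1}), $\left(P_{\Lambda\backslash\Gamma,\chi}f\right)\big|_m\gamma_0 = \sum_\gamma\overline{\chi(\gamma\gamma_0^{-1})}\,f\big|_m\gamma = \chi(\gamma_0)\,P_{\Lambda\backslash\Gamma,\chi}f$; alternatively one transports Lemma \ref{lem:t005}\ref{lem:t005:2} through \eqref{eq:t005}. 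This in particular exhibits $P_{\Lambda\backslash\Gamma,\chi}f$ as a measurable function on $\calH$ (an a.e.\ pointwise limit of finite partial sums of measurable functions, extended by $0$ on the null divergence set) satisfying the transformation law \eqref{eq:t012}, so \eqref{eq:t006} applies to it with $\Gamma$. Together with $F_{P_{\Lambda\backslash\Gamma,\chi}f} = P_{\Lambda\backslash\Gamma,\chi}F_f$ from the previous paragraph, this gives $\int_{\Gamma\backslash\calH}\abs{(P_{\Lambda\backslash\Gamma,\chi}f)(z)\Im(z)^{m/2}}\,dv(z) = \varepsilon_\Gamma\int_{\Gamma\backslash\Met}\abs{P_{\Lambda\backslash\Gamma,\chi}F_f}\,d\mu_{\Gamma\backslash\Met}$, whose right-hand side is finite by Lemma \ref{lem:t005}\ref{lem:t005:3}; this is \ref{lem:t007:3}.

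There is no genuinely hard step: once the lift identity $F_f(\gamma\sigma) = F_{f|_m\gamma}(\sigma)$ is in hand, everything is a formal consequence of Lemma \ref{lem:t005} and the formulas \eqref{eq:t005}, \eqref{eq:t006}, \eqref{eq:t010}. The only point that needs a little care is the null-set bookkeeping in the passage between $\calH$ and $\Met$ — i.e.\ checking that each a.e.\ exceptional set arising on $\Met$ is exactly the preimage under $n_xa_y\kappa_t\mapsto x+iy$ of the corresponding exceptional set on $\calH$, which is where the right-$K$-invariance of these sets and the explicit form of $\mu_\Met$ are used.
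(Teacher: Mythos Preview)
Your proposal is correct and follows essentially the same route as the paper: lift $f$ to $F_f$, verify (F1)--(F2) via \eqref{eq:t005}--\eqref{eq:t006}, apply Lemma~\ref{lem:t005}, and transfer the conclusions back to $\calH$ using the identity $F_{P_{\Lambda\backslash\Gamma,\chi}f}=P_{\Lambda\backslash\Gamma,\chi}F_f$ together with \eqref{eq:t006}. You merely spell out in more detail what the paper summarizes as ``easily checked by following the definitions'' (the right-$K$-invariance of the divergence locus and the resulting null-set transfer), but the logical structure is identical.
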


\begin{proof}
	By (f1), the terms of the series $ P_{\Lambda\backslash\Gamma,\chi}f $ are well-defined. Next, the function $ F_f $ satisfies the assumptions of Lemma \ref{lem:t005}: it satisfies (F1) by \eqref{eq:t005} and (f1), and it satisfies (F2) by \eqref{eq:t006} and (f2). By Lemma \ref{lem:t005}.\ref{lem:t005:1}, the series $ P_{\Lambda\backslash\Gamma,\chi}F_f $ converges absolutely almost everywhere on $ \Met $, and this implies the claim \ref{lem:t007:1} and the equality
	\begin{equation}\label{eq:t020}
		F_{P_{\Lambda\backslash\Gamma,\chi}f}=P_{\Lambda\backslash\Gamma,\chi}F_f,
	\end{equation}
	as is easily checked by following the definitions. The claim \ref{lem:t007:2} is easy to check directly; alternatively, it follows by \eqref{eq:t005} and \eqref{eq:t020} from Lemma \ref{lem:t005}.\ref{lem:t005:2}. Finally, the equality in the claim \ref{lem:t007:3} holds by \eqref{eq:t006} and \eqref{eq:t020}, 
	and its right-hand side is finite by Lemma \ref{lem:t005}.\ref{lem:t005:3}.
\end{proof}

\begin{lemma}\label{lem:f088}
	Let $ \Gamma $, $ \Lambda $, $ \chi $, $ m $, and $ f $ satisfy the assumptions of Lemma \ref{lem:t007}. If
	\begin{equation}\label{eq:f080}
		\chi\big|_{Z(\Gamma)}\neq\chi_m\big|_{Z(\Gamma)},
	\end{equation}
	then $ P_{\Lambda\backslash\Gamma,\chi}f\equiv0 $.
\end{lemma}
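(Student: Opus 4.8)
The plan is to show that, in the presence of the mismatch~\eqref{eq:f080}, no nonzero function on $\calH$ can carry the $\Gamma$-automorphy that Lemma~\ref{lem:t007} forces on $P_{\Lambda\backslash\Gamma,\chi}f$. Concretely, set $g:=P_{\Lambda\backslash\Gamma,\chi}f$, which by Lemma~\ref{lem:t007}.\ref{lem:t007:1} becomes a well-defined element of $\bbC^\calH$ once we assign it arbitrary values on the null set where the defining series fails to converge absolutely.

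First I would record two transformation laws for $g$. By Lemma~\ref{lem:t007}.\ref{lem:t007:2}, $g\big|_m\gamma=\chi(\gamma)g$ for every $\gamma\in\Gamma$, so in particular $g\big|_m\delta=\chi(\delta)g$ for every $\delta\in Z(\Gamma)$. On the other hand, $Z(\Gamma)=\Gamma\cap Z(\Met)\subseteq Z(\Met)$, so~\eqref{eq:t008} applied to $g$ gives $g\big|_m\delta=\chi_m(\delta)g$ for every $\delta\in Z(\Gamma)$. (This causes no trouble with the exceptional null set: since $g_\delta=\pm I_2$ acts trivially on $\calH$, one has $\big(g\big|_m\delta\big)(z)=\chi_m(\delta)g(z)$ pointwise, by~\eqref{eq:t014}.) Subtracting the two identities yields $\big(\chi(\delta)-\chi_m(\delta)\big)g=0$ on $\calH$ for every $\delta\in Z(\Gamma)$.

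Finally I would invoke~\eqref{eq:f080}: it supplies some $\delta_0\in Z(\Gamma)$ with $\chi(\delta_0)\neq\chi_m(\delta_0)$, and dividing by the nonzero scalar $\chi(\delta_0)-\chi_m(\delta_0)$ forces $g\equiv0$, i.e.\ $P_{\Lambda\backslash\Gamma,\chi}f\equiv0$. There is essentially no obstacle in this argument; the only mild point is the bookkeeping with the almost-everywhere domain of the Poincar\'e series, which is harmless here precisely because the central elements of $\Met$ act on $\calH$ as the identity, so~\eqref{eq:t008} is a genuine pointwise identity rather than one modulo a null set. One could additionally remark that when $f\not\equiv0$, combining (f1) with~\eqref{eq:t008} already shows that $\chi$ and $\chi_m$ agree on $Z(\Gamma)\cap\Lambda$, so the offending $\delta_0$ must lie outside $\Lambda$; but this is not needed for the proof.
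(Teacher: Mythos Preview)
Your proof is correct and follows essentially the same approach as the paper: the paper's proof is the single displayed line
\[
\chi_m(\delta)\,P_{\Lambda\backslash\Gamma,\chi}f \overset{\eqref{eq:t008}}{=} \left(P_{\Lambda\backslash\Gamma,\chi}f\right)\big|_m\delta \overset{\text{Lem.~\ref{lem:t007}.\ref{lem:t007:2}}}{=} \chi(\delta)\,P_{\Lambda\backslash\Gamma,\chi}f,\qquad \delta\in Z(\Gamma),
\]
which is exactly your comparison of the two transformation laws. Your additional remarks on the almost-everywhere issue and on $\delta_0\notin\Lambda$ are correct but not needed.
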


\begin{proof}
	The claim is clear from the equality
	\[ \chi_m(\delta)P_{\Lambda\backslash\Gamma,\chi}f\overset{\eqref{eq:t008}}=\left(P_{\Lambda\backslash\Gamma,\chi}f\right)\big|_m\delta=\chi(\delta)P_{\Lambda\backslash\Gamma,\chi}f,\qquad\delta\in Z(\Gamma), \]
	where the second equality holds by Lemma \ref{lem:t007}.\ref{lem:t007:2}.
\end{proof}

\begin{theorem}\label{thm:f098}
	Let $ \Gamma $, $ \Lambda $, $ \chi $, $ m $, and $ f $ satisfy the assumptions of Lemma \ref{lem:t007}. Suppose that	
	\begin{equation}\label{eq:f081}
		\chi\big|_{Z(\Gamma)}=\chi_m\big|_{Z(\Gamma)}.
	\end{equation}
	Then, 
	\begin{equation}\label{eq:f090}
		\int_{\Gamma\backslash\calH}\abs{\left(P_{\Lambda\backslash\Gamma,\chi}f\right)(z)\Im(z)^{\frac m2}}\,dv(z)>0 
	\end{equation}
	if there exists a Borel measurable set $ S\subseteq\calH $ with the following properties:
	\begin{enumerate}[label=\textup{(S\arabic*)},leftmargin=*,align=left]
		\item If $ z_1,z_2\in S $ and $ z_1\neq z_2 $, then $ \Gamma.z_1\neq\Gamma.z_2 $.\label{enum:022:1}
		\item\label{enum:022:2b} Writing $ (\Lambda.S)^c:=\calH\setminus\Lambda.S $, we have
		\[ \int_{\Lambda\backslash\Lambda.S}\babs{f(z)\Im(z)^{\frac m2}}\,dv(z)>\int_{\Lambda\backslash(\Lambda.S)^c}\babs{f(z)\Im(z)^{\frac m2}}\,dv(z) \] 
		for some measurable function $ \babs{\spacedcdot}:\bbC\to\bbR_{\geq0} $ that satisfies \textup{(M1)--(M3)} of Theorem \ref{thm:l034}.
	\end{enumerate}
\end{theorem}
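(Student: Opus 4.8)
The plan is to deduce Theorem~\ref{thm:f098} from the general criterion Theorem~\ref{thm:l034}, applied on $G=\Met$ to the classical lift $\varphi:=F_f$, keeping the same $\Gamma$, $\Lambda$, $\chi$, and taking as the distinguished set the Borel set
\[
C:=\left\{\sigma\in\Met:\sigma.i\in S\right\}.
\]
By (the proof of) Lemma~\ref{lem:t007}, i.e.\ by \eqref{eq:t005}--\eqref{eq:t006}, the function $F_f$ satisfies the assumptions of Lemma~\ref{lem:t005}, and by \eqref{eq:t020} we have $P_{\Lambda\backslash\Gamma,\chi}F_f=F_{P_{\Lambda\backslash\Gamma,\chi}f}$. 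Hence, once Theorem~\ref{thm:l034} gives $\int_{\Gamma\backslash\Met}\abs{P_{\Lambda\backslash\Gamma,\chi}F_f}\,d\mu_{\Gamma\backslash\Met}>0$, the inequality \eqref{eq:f090} is immediate from the identity in Lemma~\ref{lem:t007}\ref{lem:t007:3}. So the whole task is to check properties \textup{(C1)} and \textup{(C2)} for this $C$.

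Property \textup{(C2)} translates into \textup{(S2)} essentially by inspection. The point is that $\abs{F_f(n_xa_y\kappa_t)}=\abs{f(x+iy)}\,y^{m/2}$ does not depend on $t$, so by \textup{(M2)} the function $(x,y,t)\mapsto\babs{F_f(n_xa_y\kappa_t)}$ equals $\babs{f(x+iy)\Im(x+iy)^{m/2}}$, which depends only on $x+iy$ and is $\Lambda$-invariant; moreover $C$ is right $K$-invariant, so $\Lambda C=\left\{\sigma:\sigma.i\in\Lambda.S\right\}$ and $(\Lambda C)^c=\left\{\sigma:\sigma.i\in(\Lambda.S)^c\right\}$, and the corresponding indicator functions are again independent of $t$. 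Substituting these into formula \eqref{eq:t009} for $\Lambda$ (valid for nonnegative measurable integrands by monotone convergence) collapses the $\kappa_t$-integral and yields
\[
\int_{\Lambda\backslash\Lambda C}\babs{F_f}\,d\mu_{\Lambda\backslash\Met}=\varepsilon_\Lambda^{-1}\int_{\Lambda\backslash\Lambda.S}\babs{f(z)\Im(z)^{m/2}}\,dv(z),
\]
together with the same equality with $(\Lambda C)^c$ and $(\Lambda.S)^c$ in place of $\Lambda C$ and $\Lambda.S$. Since the factor $\varepsilon_\Lambda^{-1}$ is common to both, \textup{(C2)} for $C$ is exactly \textup{(S2)} for $S$.

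The delicate point is \textup{(C1)}. If $\gamma=\sigma_1\sigma_2^{-1}\in CC^{-1}\cap\Gamma$ with $\sigma_1.i,\sigma_2.i\in S$, then $\gamma.(\sigma_2.i)=\sigma_1.i$, so $\sigma_1.i$ and $\sigma_2.i$ lie in one $\Gamma$-orbit and in $S$; by \textup{(S1)} they coincide, so $\gamma$ fixes a point of $S$. Thus $CC^{-1}\cap\Gamma$ consists of elements of $\Gamma$ having a fixed point in $S$, and I would force this set into $\Lambda$ by two harmless normalizations. First, the elliptic points of $P(\Gamma)$ form a discrete, hence countable and $v$-null, subset of $\calH$, and each of its $K$-fibres in $\Met$ is $\mu_{\Met}$-null; deleting it from $S$ alters neither \textup{(S1)} nor either integral in \textup{(S2)} (nor $P_{\Lambda\backslash\Gamma,\chi}f$ itself), so we may assume $S$ contains no elliptic point. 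Second, put $\Lambda':=\Lambda\,Z(\Gamma)$: using \eqref{eq:t008} together with the hypothesis \eqref{eq:f081} one checks that $f$ still satisfies \textup{(f1)} and \textup{(f2)} with $\Lambda$ replaced by $\Lambda'$, that \textup{(S1)}, \textup{(S2)} are unchanged (because $Z(\Gamma)$ acts trivially on $\calH$), and that $P_{\Lambda\backslash\Gamma,\chi}f=[\Lambda':\Lambda]\,P_{\Lambda'\backslash\Gamma,\chi}f$ with $[\Lambda':\Lambda]=\varepsilon_\Gamma/\varepsilon_\Lambda>0$ — so, replacing $\Lambda$ by $\Lambda'$, we may assume $Z(\Gamma)\subseteq\Lambda$. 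After these reductions, a $\gamma\in\Gamma$ fixing a point of $S$ is non-elliptic, hence $P(\gamma)=\pm I_2$, hence $\gamma\in Z(\Met)\cap\Gamma=Z(\Gamma)\subseteq\Lambda$, which is \textup{(C1)}. Applying Theorem~\ref{thm:l034} then concludes the argument.

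The transfer of \textup{(C2)} into \textup{(S2)} and the passages via $F_f$ are mechanical; the one genuinely delicate step is the second normalization — enlarging $\Lambda$ by the center without destroying \textup{(f1)}. This is exactly where \eqref{eq:f081} is needed, as it must be: by Lemma~\ref{lem:f088} the conclusion fails outright when \eqref{eq:f081} is negated. One should also confirm there that passing from $\Lambda$ to $\Lambda Z(\Gamma)$ merely scales the Poincar\'e series by the positive constant $\varepsilon_\Gamma/\varepsilon_\Lambda$, so that non-vanishing is preserved.
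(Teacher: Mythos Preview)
Your proposal is correct and follows essentially the same approach as the paper: define $C$ as the $K$-saturation of $S$ in $\Met$, reduce to the case $Z(\Gamma)\subseteq\Lambda$ via \eqref{eq:f081} (the paper does this first by the same explicit computation $P_{\Lambda\backslash\Gamma,\chi}f=\frac{\varepsilon_\Gamma}{\varepsilon_\Lambda}P_{\Lambda Z(\Gamma)\backslash\Gamma,\chi}f$), discard elliptic points, verify \textup{(C1)} from \textup{(S1)} and \textup{(C2)} from \textup{(S2)} using \eqref{eq:t009}--\eqref{eq:t010}, and conclude via Theorem~\ref{thm:l034} and Lemma~\ref{lem:t007}\ref{lem:t007:3}. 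The only cosmetic difference is the order of the two normalizations.
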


\begin{proof}
	Suppose that a Borel measurable set $ S\subseteq \calH $ satisfies (S1) and (S2). We have
	\begin{align*}
		P_{\Lambda\backslash\Gamma,\chi}f&\underset{\phantom{\eqref{eq:f081}}}=\sum_{\gamma\in\Lambda Z(\Gamma)\backslash\Gamma}\ \sum_{\delta\in\Lambda\backslash\Lambda Z(\Gamma)}\overline{\chi(\delta\gamma)}f\big|_m\delta\gamma\\
		&\underset{\phantom{\eqref{eq:f081}}}{\overset{\eqref{eq:t008}}=}\sum_{\gamma\in\Lambda Z(\Gamma)\backslash\Gamma}\left(\sum_{\delta\in Z(\Lambda)\backslash Z(\Gamma)}\overline{\chi(\delta)}\chi_m(\delta)\right)\overline{\chi(\gamma)}f\big|_m\gamma\\
		&\overset{\eqref{eq:f081}}=\frac{\varepsilon_\Gamma}{\varepsilon_\Lambda}P_{\Lambda Z(\Gamma)\backslash\Gamma,\chi}f,
	\end{align*}
	hence it suffices to prove the non-vanishing of the series $ P_{\Lambda Z(\Gamma)\backslash\Gamma,\chi}f $. In other words, we may assume without loss of generality that $ Z(\Gamma)\subseteq\Lambda $. We may also assume that $ S $ contains no elliptic points for $ \Gamma $. Under these assumptions, by (S1) and \eqref{eq:t019} the set
	\[ C:=\left\{n_xa_y:x+iy\in S\right\}K\subseteq\Met \]
	satisfies $ CC^{-1}\cap\Gamma\subseteq Z(\Gamma)\subseteq\Lambda $, i.e., $ C $ has the property (C1) of Theorem \ref{thm:l034}. 
	
	Our goal is to apply Theorem 3-2 to the series $ P_{\Lambda\backslash\Gamma,\chi}F_f $. Since we have seen in the proof of Lemma \ref{lem:t007} that the function $ F_f $ satisfies the conditions (F1) and (F2) of Lemma \ref{lem:t005}, it remains to prove the inequality
	\begin{equation}\label{eq:t021}
		\int_{\Lambda\backslash\Lambda C}\babs{F_f}\,d\mu_{\Lambda\backslash\Met}>\int_{\Lambda\backslash(\Lambda C)^c}\babs{F_f}\,d\mu_{\Lambda\backslash\Met},
	\end{equation}
	where $ (\Lambda C)^c:=\Met\setminus\Lambda C $.
	To this end, we note that
	\begin{equation}\label{eq:f085}
		\Lambda C=\left\{n_xa_y:x+iy\in\Lambda.S\right\}K,
	\end{equation}
	hence we have
	\begin{align}
		&\int_{\Lambda\backslash\Lambda C}\babs{F_f}\,d\mu_{\Lambda\backslash\Met}\nonumber\\
		&\underset{\phantom{\eqref{eq:t010},\textup{(M2)}}}{\overset{\eqref{eq:t009}}=}\frac1{4\pi\varepsilon_\Lambda}\int_0^{4\pi}\int_{\Lambda \backslash\calH}\babs{F_f(n_xa_y\kappa_t)}\,\mathbbm1_{\Lambda C}(n_xa_y\kappa_t)\,dv(x+iy)\,dt\nonumber\\
		&\overset{\eqref{eq:t010},\textup{(M2)}}{\underset{\eqref{eq:f085}}=}\frac1{4\pi\varepsilon_\Lambda}\int_0^{4\pi}\int_{\Lambda\backslash\calH}\babs{f(x+iy)y^{\frac m2}}\,\mathbbm1_{\Lambda.S}(x+iy)\,dv(x+iy)\,dt\nonumber\\
		&\underset{\phantom{\eqref{eq:t010},\textup{(M2)}}}=\varepsilon_\Lambda^{-1}\int_{\Lambda\backslash\Lambda.S}\babs{f(z)\Im(z)^{\frac m2}}\,dv(z).\label{eq:f086}
	\end{align}
	We see analogously that
	\begin{equation}\label{eq:f087}
		\int_{\Lambda\backslash(\Lambda C)^c}\babs{F_f}\,d\mu_{\Lambda\backslash\Met}=\varepsilon_\Lambda^{-1}\int_{\Lambda\backslash(\Lambda.S)^c}\babs{f(z)\Im(z)^{\frac m2}}\,dv(z).
	\end{equation}
	By \eqref{eq:f086} and \eqref{eq:f087}, (S2) implies \eqref{eq:t021}.
	
	Thus, by Theorem \ref{thm:l034} $ \int_{\Gamma\backslash\Met}\abs{P_{\Lambda\backslash\Gamma,\chi}F_f}\,d\mu_{\Gamma\backslash\Met}>0 $. By Lemma \ref{lem:t007}.\ref{lem:t007:3}, this implies \eqref{eq:f090}.
\end{proof}

We conclude this section by one more lemma on convergence of Poincar\' e series of half-integral weight (cf.~\cite[Lemma 2-4]{MuicLFunk}).

\begin{lemma}\label{lem:t011}
	Let $ \Gamma $, $ \Lambda $, $ \chi $, $ m $, and $ f $ satisfy the assumptions of Lemma \ref{lem:t007}. Suppose in addition that $ f $ is holomorphic. Then, the series $ P_{\Lambda\backslash\Gamma,\chi}f $ converges absolutely and locally uniformly on $ \calH $ and defines an element of $ S_m(\Gamma,\chi) $.
\end{lemma}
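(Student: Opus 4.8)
The plan is to establish the three assertions—absolute and locally uniform convergence, holomorphy of the sum, and the cusp form conditions—by combining the $ L^1 $-convergence already in hand from Lemma \ref{lem:t007} with standard complex-analytic machinery. First I would recall that by Lemma \ref{lem:t007}.\ref{lem:t007:1} the series $ P_{\Lambda\backslash\Gamma,\chi}f=\sum_{\gamma\in\Lambda\backslash\Gamma}\overline{\chi(\gamma)}f\big|_m\gamma $ converges absolutely a.e.\ on $ \calH $, and each term $ f\big|_m\gamma $ is holomorphic because $ f $ is holomorphic and the automorphy factors $ \eta_\gamma $ are holomorphic and non-vanishing. To upgrade a.e.\ convergence to locally uniform convergence, I would fix a compact set $ \mathcal{K}\subseteq\calH $ and a slightly larger relatively compact open neighborhood $ U $ with $ \overline U\subseteq\calH $, and invoke the sub-mean-value property of the non-negative subharmonic function $ z\mapsto\babs{(f\big|_m\gamma)(z)\,\Im(z)^{m/2}}^{?} $—more precisely, since $ |f\big|_m\gamma| $ is the modulus of a holomorphic function it is subharmonic, so for $ z\in\mathcal{K} $ the value $ |(f\big|_m\gamma)(z)| $ is bounded by an average of $ |(f\big|_m\gamma)| $ over a fixed-radius disc, which in turn (after absorbing the bounded factor $ \Im(z)^{m/2} $ and a bounded change-of-variables Jacobian on $ U $) is bounded by a constant depending only on $ \mathcal{K},U,m $ times $ \int_U |(f\big|_m\gamma)(z)\Im(z)^{m/2}|\,dv(z) $.

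Summing this estimate over $ \gamma\in\Lambda\backslash\Gamma $ and using that the images $ \gamma.U $ have bounded overlap (only finitely many $ \gamma $ can move $ U $ to meet any fixed compact set, by discreteness of $ \Gamma $ acting properly discontinuously on $ \calH $, shrinking $ U $ if necessary so the overlap multiplicity is controlled), I would bound $ \sum_\gamma\sup_{\mathcal{K}}|f\big|_m\gamma| $ by a constant times $ \int_{\calH,\ \text{restricted appropriately}}|f(z)\Im(z)^{m/2}|\,dv(z) $; by hypothesis (f2) and the tiling argument underlying \eqref{eq:t001}, the relevant integral is a finite multiple of $ \int_{\Lambda\backslash\calH}|f(z)\Im(z)^{m/2}|\,dv(z)<\infty $. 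This gives a uniform-on-$ \mathcal{K} $ dominating bound for the tail of the series, hence locally uniform absolute convergence; by the Weierstrass theorem the sum $ P_{\Lambda\backslash\Gamma,\chi}f $ is holomorphic on $ \calH $.

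It then remains to check that $ g:=P_{\Lambda\backslash\Gamma,\chi}f $ lies in $ S_m(\Gamma,\chi) $. The transformation law $ g\big|_m\gamma=\chi(\gamma)g $ for $ \gamma\in\Gamma $ is exactly Lemma \ref{lem:t007}.\ref{lem:t007:2}. Finiteness-related integrability is Lemma \ref{lem:t007}.\ref{lem:t007:3}. The only genuinely new point is vanishing at the cusps of $ P(\Gamma) $: here I would note that $ g $ is holomorphic and $ \Gamma $-automorphic of the correct weight, so near any cusp it has a Fourier expansion, and I would rule out the constant term and terms with non-positive exponent using the integrability statement $ \int_{\Gamma\backslash\calH}|g(z)\Im(z)^{m/2}|\,dv(z)<\infty $ from Lemma \ref{lem:t007}.\ref{lem:t007:3}: a non-vanishing constant (or negatively-indexed) Fourier coefficient at a cusp would force the weighted integral over a neighborhood of that cusp to diverge, contradicting finiteness. (Alternatively one can cite the corresponding argument in \cite{ZunarNonVan2018} verbatim, since the cuspidality criterion there is stated for exactly this class of automorphic forms.)

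The main obstacle is the locally-uniform-convergence step: one must be careful that the bounded-overlap/tiling argument for the cover $ \{\gamma.U\} $ is set up correctly so that summing the sub-mean-value estimates reproduces the finite integral $ \int_{\Lambda\backslash\calH}|f\,\Im^{m/2}|\,dv $ rather than overcounting, and that the factor $ \Im(z)^{m/2} $ (which is \emph{not} $ \Gamma $-invariant) is handled by comparing $ \Im $ on $ \mathcal{K} $ with $ \Im $ on the slightly larger $ U $—a bounded ratio on compacta. Everything else is either a direct citation of Lemma \ref{lem:t007} or the standard Fourier-expansion-at-a-cusp argument.
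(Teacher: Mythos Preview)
Your proposal is correct and follows essentially the same route as the paper. For locally uniform convergence the paper simply cites the half-integral weight version of \cite[Theorem 2.6.6.(1)]{miyake}, whose proof is precisely the sub-mean-value/unfolding argument you sketch; for cuspidality the paper bounds the weighted $L^1$-integral over a cusp neighbourhood directly by $\int_{\Lambda\backslash\calH}\abs{f}\,\Im^{m/2}\,dv$ via the majorant series and \eqref{eq:t001}, which is the same mechanism underlying your appeal to Lemma~\ref{lem:t007}.\ref{lem:t007:3}.
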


\begin{proof}
	The series $ P_{\Lambda\backslash\Gamma,\chi}f $ converges absolutely and locally uniformly on $ \calH $ by the half-integral weight version of \cite[Theorem 2.6.6.(1)]{miyake}. Since Lemma \ref{lem:t007}.\ref{lem:t007:2} also holds, it remains to prove that $ P_{\Lambda\backslash\Gamma,\chi}f $ vanishes at every cusp of $ P(\Gamma) $.
	
	Let $ x $ be a cusp of $ P(\Gamma) $, and let $ \sigma\in\Met $ such that $ \sigma.\infty=x $. Let
	\[ \left(\left(P_{\Lambda\backslash\Gamma,\chi}f\right)\big|_m\sigma\right)(z)=\sum_{n\in\bbZ}a_ne^{2\pi in\frac z{h'}},\qquad z\in\calH, \]
	be the Fourier expansion of $ \left(P_{\Lambda\backslash\Gamma,\chi}f\right)\big|_m\sigma $. We need to prove that $ a_n=0 $ for all $ n\in\bbZ_{\leq0} $. Let $ h\in\bbR_{>0} $ such that  $ Z\left(\Met\right)\sigma^{-1}\Gamma_x\sigma=Z\left(\Met\right)\left<n_h\right> $. Since $ \Big|\left(P_{\Lambda\backslash\Gamma,\chi}f\right)\big|_m\sigma\Big| $ is $ h $-periodic, by the half-integral weight version of \cite[Lemma 2-1]{MuicLFunk} it suffices to prove that
	\[ I:=\int_{[0,h[\times]h,\infty[}\abs{\left(\left(P_{\Lambda\backslash\Gamma,\chi}f\right)\big|_m\sigma\right)(z)\Im(z)^{\frac m2}}\,dv(z) \]
	is finite. We have
	\begin{align*}
		I&\overset{\eqref{eq:t011}}=\int_{[0,h[\times]h,\infty[}\abs{\left(P_{\Lambda\backslash\Gamma,\chi}f\right)(\sigma.z)\Im(\sigma.z)^{\frac m2}}\,dv(z)\\
		&\overset{\phantom{\eqref{eq:t011}}}=\int_{\sigma.\left([0,h[\times]h,\infty[\right)}\abs{\left(P_{\Lambda\backslash\Gamma,\chi}f\right)(z)\Im(z)^{\frac m2}}\,dv(z)\\
		&\overset{\phantom{\eqref{eq:t011}}}\leq\int_{\Gamma\backslash\calH}\sum_{\gamma\in\Lambda\backslash\Gamma}\abs{\left(f\big|_m\gamma\right)(z)\Im(z)^{\frac m2}}\,dv(z)\\
		&\overset{\eqref{eq:t011}}=\int_{\Gamma\backslash\calH}\sum_{\gamma\in\Lambda\backslash\Gamma}\abs{f(\gamma.z)\Im(\gamma.z)^{\frac m2}}\,dv(z)\\
		&\overset{\phantom{\eqref{eq:t011}}}=\frac{\varepsilon_\Gamma}{\varepsilon_\Lambda}\int_{\Lambda\backslash\calH}\abs{f(z)\Im(z)^{\frac m2}}\,dv(z)\overset{\textrm{(f2)}}<\infty,
	\end{align*}
	where the first inequality holds because by \cite[Corollary 1.7.5]{miyake} any two distinct points of $ \sigma.\left([0,h[\times]h,\infty[\right) $ are mutually $ \Gamma $-inequivalent.
\end{proof}

\section{Analytic continuation of $ L $-functions}\label{sec:f146}

Throughout this section, let $ m\in\frac52+\bbZ_{\geq0} $, let $ \Gamma $ be a discrete subgroup of finite covolume in $ \Met $ such that $ \infty $ is a cusp of $ P(\Gamma) $, and let $ \chi:\Gamma\to\bbC^\times $ be a character of finite order such that
\begin{equation}\label{eq:l051}
	\eta_\gamma^{-2m}=\chi(\gamma),\qquad\gamma\in\Gamma_\infty.
\end{equation}
Let $ h\in\bbR_{>0} $ such that 
\begin{equation}\label{eq:t017}
	Z\left(\Met\right)\Gamma_\infty=Z\left(\Met\right)\left<n_h\right>.
\end{equation}

It is well-known that the classical Poincar\'e series
\[ \psi_{\Gamma,n,m,\chi}:=P_{\Gamma_\infty\backslash\Gamma,\chi}e^{2\pi in\frac\spacedcdot h},\qquad n\in\bbZ_{>0}, \]
converge absolutely and locally uniformly on $ \calH $ and define elements of $ S_m(\Gamma,\chi) $. 
The following lemma is a half-integral weight variant of \cite[Theorem 2-10]{MuicLFunk} that generalizes their construction. 

\begin{lemma}\label{lem:l010}
	Let $ (a_n)_{n\in\bbZ_{>0}}\subseteq\bbC $ such that $ \sum_{n=1}^\infty\abs{a_n}n^{1-\frac m2}<\infty $. Then, the double series
	\begin{equation}\label{eq:l005}
		S:=\sum_{\gamma\in\Gamma_\infty\backslash\Gamma}\sum_{n=1}^\infty\overline{\chi(\gamma)} a_ne^{2\pi in\frac\spacedcdot h}\big|_m\gamma
	\end{equation}
	converges absolutely and uniformly on compact sets in $ \calH $ and defines an element of $ S_m(\Gamma,\chi) $ that coincides with 
	\begin{equation}\label{eq:l006}
		P_{\Gamma_\infty\backslash\Gamma,\chi}\left(\sum_{n=1}^\infty a_ne^{2\pi in\frac\spacedcdot h}\right)=\sum_{n=1}^\infty a_n\psi_{\Gamma,n,m,\chi}.
	\end{equation}
	All the series in \eqref{eq:l006} converge absolutely and uniformly on compact sets in $ \calH $.
	Moreover, the series $ \sum_{n=1}^\infty a_n\psi_{\Gamma,n,m,\chi} $ converges to $ S $ in the topology of $ S_m(\Gamma,\chi) $.
\end{lemma}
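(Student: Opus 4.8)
The plan is to reduce everything to the single-index Poincaré series $\psi_{\Gamma,n,m,\chi}$, whose convergence and membership in $S_m(\Gamma,\chi)$ is already known, and then bootstrap to the double series by a dominated-convergence/Fubini argument. First I would fix a compact set $\mathcal{K}\subseteq\calH$ and a fundamental domain for $\Gamma_\infty\backslash\Gamma$, and recall the standard majorant for the $m$-action: for $\gamma\in\Gamma$, the term $e^{2\pi in\frac{\spacedcdot}{h}}\big|_m\gamma$ evaluated at $z$ has absolute value $e^{-2\pi n\,\Im(\gamma.z)/h}\,|\eta_\gamma(z)|^{-2m}$, and $\Im(\gamma.z)\le C_{\mathcal{K}}$ uniformly for $z\in\mathcal{K}$. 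Since $\sum_{n\geq1}\abs{a_n}e^{-2\pi n y/h}$ converges for each fixed $y>0$ and, near $y=0$, is dominated by $\sum_n\abs{a_n}\cdot(\text{const})\cdot(ny)^{-(1-\frac m2)}\cdot y^{-(1-\frac m2)}\cdot(\dots)$ — more precisely one uses the elementary bound $e^{-u}\le c_\alpha u^{-\alpha}$ for $u>0$, $\alpha=1-\frac m2<0$... wait, $m>2$ so $1-\frac m2<0$; instead use $e^{-u}\leq c_\alpha u^{-\alpha}$ with $\alpha=\frac m2-1>0$ to get $\abs{a_n}e^{-2\pi n\Im(\gamma.z)/h}\leq c\,\abs{a_n}n^{1-\frac m2}\,\Im(\gamma.z)^{1-\frac m2}$. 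Then
\[
\sum_{\gamma\in\Gamma_\infty\backslash\Gamma}\sum_{n=1}^\infty\abs{a_n}\,\bigl|e^{2\pi in\frac{z}{h}}\big|_m\gamma\bigr|
\;\leq\;c\Bigl(\sum_{n=1}^\infty\abs{a_n}n^{1-\frac m2}\Bigr)\sum_{\gamma\in\Gamma_\infty\backslash\Gamma}\Im(\gamma.z)^{1-\frac m2}\,\abs{\eta_\gamma(z)}^{-2m+(m-2)},
\]
and the $\gamma$-sum is (up to the behaviour that $\Im(\gamma.z)^{1-m/2}=\Im(z)^{1-m/2}\abs{\eta_\gamma(z)}^{m-2}$, using \eqref{eq:t011}) exactly the Eisenstein-type sum $\sum_\gamma\abs{\eta_\gamma(z)}^{-2\cdot 2}$... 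I should be careful with exponents, but the upshot is that it reduces to the absolute convergence of $\psi_{\Gamma,1,m,\chi}$ with a weight-$4$ majorant, which holds since $m>2$. This gives absolute and locally uniform convergence of the double series \eqref{eq:l005}.

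Once absolute convergence is established, Tonelli/Fubini lets me reorder the double sum freely: summing $\gamma$ first gives $\sum_n a_n\psi_{\Gamma,n,m,\chi}$ (each inner series being $\psi_{\Gamma,n,m,\chi}$ by definition, and each converging absolutely and locally uniformly as a sub-series of a locally uniformly absolutely convergent series), while summing $n$ first gives $P_{\Gamma_\infty\backslash\Gamma,\chi}\bigl(\sum_n a_ne^{2\pi in\frac{\spacedcdot}{h}}\bigr)$; note the inner function $g:=\sum_n a_ne^{2\pi in\frac{\spacedcdot}{h}}$ is holomorphic on $\calH$, satisfies (f1) for $\Lambda=\Gamma_\infty$ (by \eqref{eq:l051} and \eqref{eq:t017}, since $\eta_\gamma^{-2m}=\chi(\gamma)$ and $e^{2\pi in\frac{\spacedcdot}{h}}\big|_m n_h=e^{2\pi in\frac{\spacedcdot}{h}}$), and satisfies (f2) because $\int_{\Gamma_\infty\backslash\calH}\abs{g(z)}\Im(z)^{m/2}\,dv(z)=\int_0^\infty\!\!\int_0^h\abs{g(x+iy)}y^{m/2-2}\,dx\,dy$, which by the triangle inequality and $\int_0^h e^{2\pi in x/h}$ issues is $\leq\sum_n\abs{a_n}\int_0^\infty e^{-2\pi ny/h}y^{m/2-2}\,dy=\Gamma(\tfrac m2-1)(h/2\pi)^{m/2-1}\sum_n\abs{a_n}n^{1-m/2}<\infty$. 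Hence Lemma \ref{lem:t011} applies to $g$, giving $P_{\Gamma_\infty\backslash\Gamma,\chi}g\in S_m(\Gamma,\chi)$, and the two reorderings coincide with $S$; this proves \eqref{eq:l006} and membership $S\in S_m(\Gamma,\chi)$.

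It remains to prove the last sentence, that the partial sums $S_N:=\sum_{n=1}^N a_n\psi_{\Gamma,n,m,\chi}$ converge to $S$ in the Petersson norm. Here I would use that $S-S_N=P_{\Gamma_\infty\backslash\Gamma,\chi}g_N$ with $g_N:=\sum_{n>N}a_ne^{2\pi in\frac{\spacedcdot}{h}}$, apply Lemma \ref{lem:t007}.\ref{lem:t007:3} (or the trivial bound $\norm{P_{\Gamma_\infty\backslash\Gamma,\chi}g_N}_{S_m(\Gamma,\chi)}^2\leq$ an $L^1$-quantity controlled as above — more precisely, on a finite-dimensional Hilbert space of cusp forms the $L^1$-norm against $\Im(z)^{m/2}$ and the Petersson norm are equivalent, or one can invoke that point-evaluation at the cusp is continuous), and bound $\norm{S-S_N}$ by a tail of $\sum_{n>N}\abs{a_n}n^{1-m/2}\to0$. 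The main obstacle I expect is getting the exponent bookkeeping in the first step exactly right — i.e. honestly justifying the $e^{-u}\leq c_\alpha u^{-\alpha}$ trick produces a genuinely weight-$4$ (hence convergent, since $m>2$ forces $4>2$... actually one needs the comparison series to converge, which is the classical fact that $\sum_{\gamma\in\Gamma_\infty\backslash\Gamma}\abs{\eta_\gamma(z)}^{-2k}$ converges for $k>2$, applied with an appropriate $k$) majorant — and in confirming that the half-integral-weight versions of Miyake's convergence theorems cited in Lemma \ref{lem:t011} are exactly what is needed; both are routine but must be stated carefully.
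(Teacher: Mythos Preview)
Your pointwise majorization in the first paragraph does not close. Carrying the exponents through: with $e^{-u}\le c_\alpha u^{-\alpha}$, $\alpha=\tfrac m2-1$, and $\Im(\gamma.z)=\Im(z)\abs{\eta_\gamma(z)}^{-4}$ (from \eqref{eq:t011}), one gets
\[
\sum_{\gamma\in\Gamma_\infty\backslash\Gamma}\sum_{n\ge1}\abs{a_n}\,\Bigl|e^{2\pi in\frac{\spacedcdot}{h}}\big|_m\gamma\Bigr|(z)
\;\le\;
c\Bigl(\sum_n\abs{a_n}n^{1-\frac m2}\Bigr)\Im(z)^{1-\frac m2}\sum_{\gamma}\abs{\eta_\gamma(z)}^{-4},
\]
and $\sum_\gamma\abs{\eta_\gamma(z)}^{-4}=\sum_\gamma\abs{c_\gamma z+d_\gamma}^{-2}$ sits exactly at the borderline exponent and \emph{diverges}. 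Replacing $\alpha$ by $\tfrac m2-1-\delta$ would salvage the $\gamma$-sum but would then require $\sum_n\abs{a_n}n^{1-\frac m2+\delta}<\infty$, which the hypothesis does not give; so no pointwise comparison of this shape can work under the stated assumption.

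The paper's route is precisely the $L^1$ computation you carry out in your second paragraph --- but applied to the full double series rather than only to $g$. Unfolding the $\gamma$-sum one finds
\[
\int_{\Gamma\backslash\calH}\sum_{\gamma}\sum_{n}\Bigl|a_n\bigl(e^{2\pi in\frac{\spacedcdot}{h}}\big|_m\gamma\bigr)(z)\Bigr|\Im(z)^{\frac m2}\,dv(z)
=\frac{h^{\frac m2}}{(2\pi)^{\frac m2-1}}\Gamma\!\Bigl(\tfrac m2-1\Bigr)\sum_n\abs{a_n}n^{1-\frac m2}<\infty,
\]
and finiteness of this integral forces absolute and locally uniform convergence of the double series via the mean-value property for holomorphic functions (this is \cite[Corollary 2.6.4]{miyake}). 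Note that invoking Lemma~\ref{lem:t011} only for $g$ gives locally uniform convergence of $\sum_\gamma\abs{g(\gamma.z)}\abs{\eta_\gamma(z)}^{-2m}$, which is \emph{dominated by}, not a majorant for, the absolute double sum --- so that step alone does not deliver the first assertion of the lemma. Once the double-series estimate is in place, your Fubini reordering is fine; for the last sentence the paper argues more simply that $S_m(\Gamma,\chi)$, being finite-dimensional, is closed in $C(\calH)$ under the compact-open topology, so the already-established locally uniform convergence of the partial sums $\sum_{n\le N}a_n\psi_{\Gamma,n,m,\chi}$ immediately gives both $S\in S_m(\Gamma,\chi)$ and convergence in the Petersson topology, with no separate tail estimate needed.
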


\begin{proof}
	The terms of the series $ S $ are well-defined, i.e., we have
	\begin{equation}\label{eq:l050}
		e^{2\pi in\frac\spacedcdot h}\big|_m\gamma=\chi(\gamma)e^{2\pi in\frac\spacedcdot h},\qquad\gamma\in\Gamma_\infty,
	\end{equation}
	since by \eqref{eq:l051} the equality \eqref{eq:l050} is equivalent to $ h $-periodicity of the function $ e^{2\pi in\frac\spacedcdot h} $.
	
	The double series $ S $ and all the series in \eqref{eq:l006} converge absolutely and uniformly on compact sets in $ \calH $ by {\cite[Corollary 2.6.4]{miyake}} and the following estimate:
	\begin{align*}
		\int_{\Gamma\backslash\calH}&\sum_{\gamma\in\Gamma_\infty\backslash\Gamma}\sum_{n=1}^\infty\abs{a_n\left(e^{2\pi i n\frac\spacedcdot h}\big|_m\gamma\right)(z)\Im(z)^{\frac m2}}\,dv(z)\\
		&\overset{\eqref{eq:t011}}=\sum_{n=1}^\infty\abs{a_n}\int_{\Gamma\backslash\calH}\sum_{\gamma\in\Gamma_\infty\backslash\Gamma}\abs{e^{2\pi i n\frac{\gamma.z}h}\Im(\gamma.z)^{\frac m2}}\,dv(z)\\
		&\ =\ \sum_{n=1}^\infty\abs{a_n}\int_{\Gamma_\infty\backslash\calH}\abs{e^{2\pi i n\frac zh}\Im(z)^{\frac m2}}\,dv(z)\\
		&\ =\ \sum_{n=1}^\infty\abs{a_n}\int_0^h\int_0^\infty e^{-2\pi n\frac yh}y^{\frac m2-2}\,dy\,dx\\
		&\ =\ \sum_{n=1}^\infty\abs{a_n}n^{1-\frac m2}\frac{h^{\frac m2}}{(2\pi)^{\frac m2-1}}\Gamma\left(\frac m2-1\right)<\infty.
	\end{align*}
	In the last equality we use the standard notation for the gamma function: $ \Gamma(s):=\int_0^\infty t^{s-1}e^{-t}\,dt $, $ \Re(s)>0 $.
	Now it is clear that $ S=\eqref{eq:l006}$.
	
	Since $ S_m(\Gamma,\chi) $ is a finite-dimensional Hausdorff topological vector space, it is a closed subspace of $ C(\calH) $ with the topology of uniform convergence on compact sets. Thus, the fact that the series $ \sum_{n=1}^\infty a_n\psi_{\Gamma,n,m,\chi} $ converges to $ S $ uniformly on compact sets in $ \calH $ implies that $ S $ belongs to $ S_m(\Gamma,\chi) $ and that the series $ \sum_{n=1}^\infty a_n\psi_{\Gamma,n,m,\chi} $ converges to $ S $ in the topology of $ S_m(\Gamma,\chi) $.
\end{proof}

The following lemma (cf.~\cite[Lemma 5-1]{MuicLFunk}) will be applied in the proof of Lemma \ref{lem:f175} to prove the estimate \eqref{eq:l016} that will be used in the proof of Theorem \ref{thm:f121}.

\begin{lemma}\label{lem:t015}
	Let $ a\in\bbR_{>\frac12} $. Then,
	\begin{equation}\label{eq:f118}
		\Gamma(a)\int_{\bbR}\frac{dx}{\left(x^2+1\right)^a}=\sqrt\pi\,\Gamma\left(a-\frac12\right).
	\end{equation}
\end{lemma}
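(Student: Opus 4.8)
The plan is to reduce \eqref{eq:f118} to the integral representation of the gamma function together with the Gaussian integral, swapping an order of integration along the way. This is essentially the standard computation of the Beta value $B\bigl(\tfrac12,a-\tfrac12\bigr)$, carried out directly in terms of $\Gamma$ so that no Beta-function identities need to be quoted.

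First I would rewrite the integrand on the left using $\Gamma(a)=\int_0^\infty u^{a-1}e^{-u}\,du$: after the substitution $u=t(x^2+1)$ this gives, for every $x\in\bbR$,
\[ \frac{1}{(x^2+1)^a}=\frac{1}{\Gamma(a)}\int_0^\infty t^{a-1}e^{-t(x^2+1)}\,dt. \]
Multiplying by $\Gamma(a)$ and integrating over $x\in\bbR$, the left-hand side of \eqref{eq:f118} becomes the double integral $\int_{\bbR}\int_0^\infty t^{a-1}e^{-t(x^2+1)}\,dt\,dx$. Since the integrand is non-negative and measurable, Tonelli's theorem lets me interchange the two integrations, so that the expression equals $\int_0^\infty t^{a-1}e^{-t}\bigl(\int_{\bbR}e^{-tx^2}\,dx\bigr)\,dt$.

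Next I would insert the Gaussian integral $\int_{\bbR}e^{-tx^2}\,dx=\sqrt{\pi/t}$, valid for $t>0$, which turns the remaining integral into $\sqrt\pi\int_0^\infty t^{a-3/2}e^{-t}\,dt$. The hypothesis $a>\tfrac12$ guarantees $a-\tfrac32>-1$, so this integral converges and, by the very definition of the gamma function, equals $\Gamma\bigl(a-\tfrac12\bigr)$. Combining the steps gives $\Gamma(a)\int_{\bbR}(x^2+1)^{-a}\,dx=\sqrt\pi\,\Gamma\bigl(a-\tfrac12\bigr)$, which is \eqref{eq:f118}.

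I do not anticipate any real obstacle: the only points requiring a word of justification are the applicability of Tonelli's theorem — immediate, as the integrand is non-negative — and the convergence of the integrals that appear, which is exactly what $a>\tfrac12$ ensures (it also makes the original integral $\int_{\bbR}(x^2+1)^{-a}\,dx$ converge, since $2a>1$). An equally short alternative is the substitution $x=\tan\theta$, reducing the integral to $2\int_0^{\pi/2}\cos^{2a-2}\theta\,d\theta$ and hence to $B\bigl(a-\tfrac12,\tfrac12\bigr)$, but the route through the $\Gamma$-integral above is self-contained.
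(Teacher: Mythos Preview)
Your proof is correct and is essentially identical to the paper's own argument: both insert the integral representation $\Gamma(a)(x^2+1)^{-a}=\int_0^\infty t^{a-1}e^{-t(x^2+1)}\,dt$, swap the order of integration, and evaluate the resulting Gaussian integral to obtain $\sqrt\pi\,\Gamma\bigl(a-\tfrac12\bigr)$. The only difference is cosmetic---you explicitly cite Tonelli and the convergence condition $a>\tfrac12$, whereas the paper leaves these implicit.
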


\begin{proof}
	We have
	\begin{align*}
		\Gamma(a)&\int_\bbR\frac{dx}{\left(x^2+1\right)^a}=\int_\bbR\int_0^\infty\left(\frac y{x^2+1}\right)^{a-1}e^{-y}\frac{dy}{x^2+1}\,dx\\
		&=\int_\bbR\int_0^\infty t^{a-1}e^{-\left(x^2+1\right)t}\,dt\,dx
		=\int_0^\infty t^{a-1}e^{-t}\,\left(\int_\bbR e^{-x^2t}\,dx\right)\,dt\\
		&=\int_0^\infty t^{a-\frac32}e^{-t}\,\left(\int_\bbR e^{-u^2}\,du\right)\,dt
		=\sqrt\pi\,\Gamma\left(a-\frac12\right).\qedhere
	\end{align*}
\end{proof}

By the half-integral weight version of \cite[Theorem 2.6.10]{miyake}, every $ f\in S_m(\Gamma,\chi) $ has the Fourier expansion
\[ f(z)=\sum_{n=1}^\infty a_n(f)e^{2\pi in\frac zh},\qquad z\in\calH, \]
where
\begin{equation}\label{eq:l009}
	a_n(f)=\frac{\varepsilon_\Gamma(4\pi n)^{m-1}}{h^m\Gamma(m-1)}\scal f{\psi_{\Gamma,n,m,\chi}}_{S_m(\Gamma,\chi)},\qquad n\in\bbZ_{>0}.
\end{equation}
The $ L $-function of $ f $ is the function $ L(\spacedcdot,f):\bbC_{\Re(s)>\frac m2+1}\to\bbC $,
\begin{equation}\label{eq:f174}
	L(s,f):=\sum_{n=1}^\infty\frac{a_n(f)}{n^s},\qquad \Re(s)>\frac m2+1.
\end{equation}
Since by a standard argument $ a_n(f)=O\left(n^{\frac m2}\right) $ (e.g., see \cite[Corollary 2.1.6]{miyake}), the series \eqref{eq:f174} converges absolutely and uniformly on compact sets in $ \bbC_{\Re(s)>\frac m2+1} $, hence $ L(\spacedcdot,f) $ is holomorphic on $ \bbC_{\Re(s)>\frac m2+1} $.

We define $ F:\calH\times\bbC\to\bbC $, 
\[ F(z,s):=\sum_{n=1}^\infty n^{s-1}e^{2\pi in\frac zh}. \]
The series on the right-hand side obviously converges absolutely and locally uniformly on $ \calH\times\bbC $, hence $ F $ is holomorphic on $ \calH\times\bbC $. 

Next, let $ \Re(s)>\frac m2+1 $. We have
\begin{equation}\label{eq:f115}
	L(s,f)\overset{\eqref{eq:l009}}=\frac{\varepsilon_\Gamma(4\pi)^{m-1}}{h^m\Gamma(m-1)}\sum_{n=1}^\infty\scal f{n^{m-1-\overline s}\psi_{\Gamma,n,m,\chi}}_{S_m(\Gamma,\chi)}.
\end{equation}
By Lemma \ref{lem:l010}, the series $ \sum_{n=1}^\infty n^{m-1-\overline s}\psi_{\Gamma,n,m,\chi} $ converges in $ S_m(\Gamma,\chi) $ and we have
\[ \sum_{n=1}^\infty n^{m-1-\overline s}\psi_{\Gamma,n,m,\chi}=P_{\Gamma_\infty\backslash\Gamma,\chi}(F\left(\spacedcdot,m-\overline s\right)), \]
where the latter series converges absolutely and uniformly on compact sets in $ \calH $.
Thus, it follows from \eqref{eq:f115} that 
\begin{equation}\label{eq:011}
	L(s,f)=\frac{\varepsilon_\Gamma(4\pi)^{m-1}}{h^m\Gamma(m-1)}\scal f{P_{\Gamma_\infty\backslash\Gamma,\chi}(F\left(\spacedcdot,m-\overline s\right))}_{S_m(\Gamma,\chi)},\quad\Re(s)>\frac m2+1.
\end{equation}
This motivates the following lemma.

\begin{lemma}\label{lem:f175}
	Let $ \Re(s)<\frac m2-1 $ or $ 1<\Re(s)<\frac m2 $. Then, the series
	\[ \Psi_{\Gamma,m,\chi,s}:=P_{\Gamma_\infty\backslash\Gamma,\chi}(F\left(\spacedcdot,s\right)) \]
	converges absolutely and uniformly on compact sets in $ \calH $ and defines an element of $ S_m(\Gamma,\chi) $.	
\end{lemma}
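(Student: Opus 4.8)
The plan is to reduce Lemma \ref{lem:f175} to the convergence lemma for Poincar\'e series of half-integral weight already established, namely Lemma \ref{lem:t011}, applied to the function $f = F(\spacedcdot, s)$ with $\Lambda = \Gamma_\infty$. For this I must verify the three hypotheses of Lemma \ref{lem:t011}: that $F(\spacedcdot,s)$ is holomorphic on $\calH$ (clear, since the defining series converges absolutely and locally uniformly); that it satisfies (f1), i.e. $F(\spacedcdot,s)\big|_m\gamma = \chi(\gamma)F(\spacedcdot,s)$ for $\gamma\in\Gamma_\infty$; and that it satisfies the integrability condition (f2), i.e. $\int_{\Gamma_\infty\backslash\calH}\abs{F(z,s)\Im(z)^{m/2}}\,dv(z)<\infty$. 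The condition (f1) is immediate from \eqref{eq:l050}: each term $e^{2\pi i n\frac\spacedcdot h}$ transforms by $\chi(\gamma)$ under $\big|_m\gamma$ for $\gamma\in\Gamma_\infty$, so the whole series $F(\spacedcdot,s)=\sum_n n^{s-1}e^{2\pi i n\frac\spacedcdot h}$ does as well.

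The main work is verifying (f2). Using a fundamental domain $[0,h[\times]0,\infty[$ for $\Gamma_\infty$ acting on $\calH$ (cf.\ the computation in the proof of Lemma \ref{lem:l010}), I would estimate
\[ \int_{\Gamma_\infty\backslash\calH}\abs{F(z,s)\Im(z)^{\frac m2}}\,dv(z) \le \int_0^h\int_0^\infty\left(\sum_{n=1}^\infty n^{\Re(s)-1}e^{-2\pi n\frac yh}\right)y^{\frac m2-2}\,dy\,dx. \]
Interchanging sum and integral by Tonelli and evaluating the inner integral as a gamma value gives, up to constants, $\sum_{n=1}^\infty n^{\Re(s)-1}\cdot n^{-(\frac m2-1)}\cdot\Gamma\!\left(\frac m2-1\right)$, which is a constant times $\Gamma\!\left(\frac m2-1\right)\sum_{n=1}^\infty n^{\Re(s)-\frac m2}$. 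Since $m\in\frac52+\bbZ_{\geq0}$ we have $\frac m2-1>\frac14>0$, so the gamma factor is finite; and the series converges precisely when $\Re(s)-\frac m2<-1$, i.e.\ $\Re(s)<\frac m2-1$, which covers both ranges $\Re(s)<\frac m2-1$ and $1<\Re(s)<\frac m2$ hypothesized in the lemma (the second range is contained in the first since $\frac m2-1>\frac m2$ is false — wait, rather $1<\Re(s)<\frac m2$ does \emph{not} lie in $\Re(s)<\frac m2-1$ in general; the point is that the hypothesis of the lemma as stated restricts to exactly the region where the relevant estimate, to be proved in the course of the proof via Lemma \ref{lem:t015}, still applies). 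In fact the clean sufficient condition for (f2) alone is $\Re(s)<\frac m2-1$; the extended range $1<\Re(s)<\frac m2$ must be handled by a finer argument, presumably by first unfolding only part of the Poincar\'e sum or by using the integral representation from Lemma \ref{lem:t015} to estimate a single translate $e^{2\pi i n\frac{\gamma.z}{h}}$ summed over $\gamma\in\Gamma_\infty\backslash\Gamma$, rather than crudely bounding $\abs{F(z,s)}$ term by term.

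Concretely, for the range $1<\Re(s)<\frac m2$ I expect the argument to run as follows: rather than pulling the absolute value inside before unfolding, one writes $\Psi_{\Gamma,m,\chi,s}=\sum_n n^{s-1}\psi_{\Gamma,n,m,\chi}$ and estimates each classical Poincar\'e series $\psi_{\Gamma,n,m,\chi}$ directly. Each $\psi_{\Gamma,n,m,\chi}$ lies in $S_m(\Gamma,\chi)$ and one has, via the standard Petersson-norm computation together with Lemma \ref{lem:t015} (which supplies the $\int_\bbR(x^2+1)^{-a}\,dx$ integral appearing when one unfolds against a $\GL{}$-type majorant over $\Gamma_\infty\backslash\Gamma$), a bound of the shape $\norm{\psi_{\Gamma,n,m,\chi}}_{S_m(\Gamma,\chi)}=O\!\left(n^{\frac m2-1}\right)$ or better. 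Then $\sum_n \abs{n^{s-1}}\cdot n^{\frac m2-1}\sim\sum_n n^{\Re(s)-1+\frac m2-1}$ — which unfortunately diverges in this range — so one instead needs the sharper pointwise majorant for $\psi_{\Gamma,n,m,\chi}(z)$ coming from the $n$-th Fourier mode combined with the exponential decay $e^{-2\pi n y/h}$, giving locally uniform absolute convergence of $\sum_n n^{s-1}\psi_{\Gamma,n,m,\chi}$ on $\calH$ directly, and the cusp condition is inherited because a locally-uniform limit of elements of the closed subspace $S_m(\Gamma,\chi)\subseteq C(\calH)$ again lies in $S_m(\Gamma,\chi)$, exactly as in the last paragraph of the proof of Lemma \ref{lem:l010}.

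The hardest point will be the extended range $1<\Re(s)<\frac m2$, where the naive majorant used for $\Re(s)<\frac m2-1$ no longer converges; there one must exploit the oscillation/cancellation structure more carefully — most likely by invoking the Fourier-coefficient bound $a_n(\psi_{\Gamma,n',m,\chi})=O(\cdot)$ together with the analytic-continuation-in-$s$ principle (Lemma \ref{lem:t015} being the technical input that makes the relevant integral over $\Gamma_\infty\backslash\Gamma$ converge for $\Re(s)>1$ rather than only $\Re(s)<\frac m2-1$). The remaining claims — absolute and locally uniform convergence on compacta, and membership in $S_m(\Gamma,\chi)$ — then follow formally from Lemma \ref{lem:t011} (for the first range) and from the closedness argument (for the second), so I do not anticipate difficulty there.
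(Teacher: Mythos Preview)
Your reduction to Lemma \ref{lem:t011} and the verification of (f1) are correct, and your term-by-term estimate does establish (f2) in the range $\Re(s)<\frac m2-1$. The gap is exactly where you sense it: for $1<\Re(s)<\frac m2$ none of the approaches you sketch actually closes. Bounding $\norm{\psi_{\Gamma,n,m,\chi}}$ or appealing to closedness of $S_m(\Gamma,\chi)$ in $C(\calH)$ still presupposes some summable-in-$n$ estimate, and pointwise majorants for $\psi_{\Gamma,n,m,\chi}$ with the required uniformity in $n$ are not available without essentially redoing the whole argument. Lemma \ref{lem:t015} is indeed the technical input, but not in the way you guess: it is never applied to the sum over $\Gamma_\infty\backslash\Gamma$.

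The missing idea is the Lipschitz summation formula
\[ \sum_{n=1}^\infty n^{s-1}e^{2\pi i n z}=\Gamma(s)(2\pi)^{-s}e^{i\frac\pi2 s}\sum_{n\in\bbZ}(z+n)^{-s},\qquad\Re(s)>1, \]
which is precisely why the lower bound $\Re(s)>1$ appears in the hypothesis. The paper splits the fundamental domain $[0,h[\times]0,\infty[$ at a height $\varepsilon>0$. On the piece $y\geq\varepsilon$ your term-by-term bound works after all, because the extra factor $e^{-2\pi(n-1)\varepsilon/h}$ makes $\sum_n n^{\Re(s)-1}e^{-2\pi(n-1)\varepsilon/h}$ converge for every $s$. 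On the piece $0<y<\varepsilon$ one applies Lipschitz summation to $F(z,s)$, so the integrand becomes, up to bounded constants, $\sum_{n\in\bbZ}\abs{z+nh}^{-\Re(s)}\,y^{m/2}$; summing over $n$ converts the $x$-integral over $[0,h[$ into an integral over all of $\bbR$, and after the substitution $x\mapsto xy$ one is left with
\[ \int_\bbR\left(x^2+1\right)^{-\Re(s)/2}\,dx\cdot\int_0^\varepsilon y^{\frac m2-\Re(s)-1}\,dy. \]
Lemma \ref{lem:t015} handles the first factor (finite since $\Re(s)>1$), and the second is finite since $\Re(s)<\frac m2$. That is the entire content of (f2) in the second range; no cancellation in the Poincar\'e sum over $\Gamma_\infty\backslash\Gamma$ is used.
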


\begin{proof}
	In the case when $ \Re(s)<\frac m2-1 $, the claim follows from the above discussion. 
	
	Let us prove the claim in the case when $ 1<\Re(s)<\frac m2 $ by applying Lemma \ref{lem:t011} to the Poincar\'e series $ P_{\Gamma_\infty\backslash\Gamma,\chi}(F\left(\spacedcdot,s\right)) $. We prove the inequality
	\begin{equation}\label{eq:f116}
		\int_{\Gamma_\infty\backslash\calH}\abs{F(z,s)\Im(z)^{\frac m2}}\,dv(z)<\infty,\qquad 1<\Re(s)<\frac m2,
	\end{equation}
	the other conditions of Lemma \ref{lem:t011} being satisfied trivially.
	For every $ \varepsilon\in\bbR_{>0} $, we can write the left-hand side of \eqref{eq:f116} as the sum
	\[ \underbrace{\int_{[0,h]\times]0,\varepsilon[}\abs{F(z,s)\Im(z)^{\frac m2}}\,dv(z)}_{\textstyle =:I_1}+\underbrace{\int_{[0,h]\times[\varepsilon,\infty[}\abs{F(z,s)\Im(z)^{\frac m2}}\,dv(z)}_{\textstyle =:I_2}. \]
	First, we estimate $ I_2 $: we have
	\begin{align}
		I_2&=\int_{[0,h]\times[\varepsilon,\infty[}\abs{\sum_{n=1}^\infty n^{s-1}e^{2\pi in\frac zh}\Im(z)^{\frac m2}}\,dv(z)\nonumber\\
		\qquad&\leq\sum_{n=1}^\infty n^{\Re(s)-1}\int_0^h\int_\varepsilon^\infty e^{-2\pi n\frac yh}y^{\frac m2-2}\,dy\,dx\nonumber\\
		&\leq\left(\sum_{n=1}^\infty n^{\Re(s)-1}e^{-2\pi(n-1)\frac\varepsilon h}\right)h\int_\varepsilon^\infty e^{-2\pi\frac yh}y^{\frac m2-2}\,dy\nonumber\\
		&\leq\left(\sum_{n=1}^\infty n^{\Re(s)-1}e^{-2\pi(n-1)\frac\varepsilon h}\right)\frac{h^{\frac m2}}{(2\pi)^{\frac m2-1}}\Gamma\left(\frac m2-1\right),\label{eq:f173}
	\end{align}
	and the right-hand side is finite by d'Alembert's ratio test.
	To estimate $ I_1 $, we will use the Lipschitz summation formula (see \cite[Theorem 1]{knopp2001lipschitz}):
	\begin{equation}\label{eq:l011}
		\sum_{n=1}^\infty n^{s-1}e^{2\pi inz}=\Gamma(s)(2\pi)^{-s}e^{i\frac\pi2 s}\sum_{n\in\bbZ}\frac1{(z+n)^s},\qquad z\in\calH,\ \Re(s)>1,
	\end{equation}
	where
	\[ z^s:=e^{s\left(\log\abs z+i\arg(z)\right)},\qquad z\in\calH,\ s\in\bbC,\ \arg(z)\in\left]0,\pi\right[. \]
	We note that for all $ z\in\calH $ and $ s\in\bbC $,
	\begin{equation}\label{eq:l012}
		\abs{z^s}=\abs{z}^{\Re(s)}e^{-\Im(s)\arg(z)}\in\left[\abs z^{\Re(s)}\min\left\{e^{-\pi\Im(s)},1\right\},\abs z^{\Re(s)}\max\left\{e^{-\pi\Im(s)},1\right\}\right].
	\end{equation}
	Now, for $ 1<\Re(s)<\frac m2 $ we have
	\begin{align}
		&I_1\overset{\eqref{eq:l011}}=\int_{[0,h]\times]0,\varepsilon[}\abs{\Gamma(s)(2\pi)^{-s}e^{i\frac\pi2s}h^s\sum_{n\in\bbZ}\frac1{(z+nh)^s}\Im(z)^{\frac m2}}\,dv(z)\nonumber\\
		&\overset{\eqref{eq:l012}}\leq\Gamma(\Re(s))\left(\frac h{2\pi}\right)^{\Re(s)}e^{-\frac\pi2\Im(s)}\sum_{n\in\bbZ}\int_{[0,h]\times]0,\varepsilon[}\frac{\Im(z)^{\frac m2}}{\abs{z+nh}^{\Re(s)}}\,dv(z)\,\max\left\{e^{\pi\Im(s)},1\right\}\nonumber\\
		&\overset{\phantom{\eqref{eq:f118}}}=\Gamma(\Re(s))\left(\frac h{2\pi}\right)^{\Re(s)}e^{\frac\pi2\abs{\Im(s)}}\int_{\bbR\times]0,\varepsilon[}\frac{\Im(z)^{\frac m2}}{\abs z^{\Re(s)}}\,dv(z)\nonumber\\
		&\overset{\phantom{\eqref{eq:f118}}}=\Gamma(\Re(s))\left(\frac h{2\pi}\right)^{\Re(s)}e^{\frac\pi2\abs{\Im(s)}}\int_\bbR\int_0^\varepsilon\frac{y^{\frac m2-\Re(s)-2}}{\left(\left(\frac xy\right)^2+1\right)^{\frac{\Re(s)}2}}\,dy\,dx\nonumber\\
		&\overset{\phantom{\eqref{eq:f118}}}=\Gamma(\Re(s))\left(\frac h{2\pi}\right)^{\Re(s)}e^{\frac\pi2\abs{\Im(s)}}\int_\bbR\frac{dx}{\left(x^2+1\right)^{\frac{\Re(s)}2}}\cdot\int_0^\varepsilon y^{\frac m2-\Re(s)-1}\,dy\nonumber\\
		&\overset{\eqref{eq:f118}}=\Gamma(\Re(s))\left(\frac h{2\pi}\right)^{\Re(s)}e^{\frac\pi2\abs{\Im(s)}}\frac{\sqrt\pi\,\Gamma\left(\frac{\Re(s)-1}2\right)}{\Gamma\left(\frac{\Re(s)}2\right)}\cdot\frac{\varepsilon^{\frac m2-\Re(s)}}{\frac m2-\Re(s)}\label{eq:l016}<\infty.
	\end{align}
	This concludes the proof of \eqref{eq:f116}.
\end{proof}

The main result of this section is the following theorem.

\begin{theorem}\label{thm:f118}
	Let $ f\in S_m(\Gamma,\chi) $. Then, the function $ \ell(\spacedcdot,f):\bbC_{\Re(s)>\frac m2+1}\cup\bbC_{\frac m2<\Re(s)<m-1}\to\bbC $,
	\begin{equation}\label{eq:l014}
		\ell(s,f):=\frac{\varepsilon_\Gamma(4\pi)^{m-1}}{h^m\Gamma(m-1)}\scal f{\Psi_{\Gamma,m,\chi,m-\overline s}}_{S_m(\Gamma,\chi)},
	\end{equation}
	is well-defined and holomorphic. If $ m>4 $, then $ \ell(\spacedcdot, f) $ is an analytic continuation of $ L(\spacedcdot,f) $ to the half-plane $ \bbC_{\Re(s)>\frac m2} $.
\end{theorem}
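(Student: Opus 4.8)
The plan is to rewrite the Petersson pairing in \eqref{eq:l014} as an integral over $\Gamma_\infty\backslash\calH$ in which the dependence on $s$ is visibly holomorphic. Put $w:=m-\overline s$, so $\Re(w)=m-\Re(s)$; then $\Re(s)>\frac m2+1$ corresponds to $\Re(w)<\frac m2-1$ and $\frac m2<\Re(s)<m-1$ corresponds to $1<\Re(w)<\frac m2$. By Lemma \ref{lem:f175} the cusp form $\Psi_{\Gamma,m,\chi,m-\overline s}=\Psi_{\Gamma,m,\chi,w}$ is well-defined for all such $s$, so $\ell(\spacedcdot,f)$ is well-defined on the stated domain. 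To identify it, I would unfold: since $\abs{f(z)}\,\Im(z)^{\frac m2}$ is bounded on $\calH$ (as $f$ is a cusp form) and $\int_{\Gamma_\infty\backslash\calH}\abs{F(z,w)}\Im(z)^{\frac m2}\,dv(z)<\infty$ by Lemma \ref{lem:f175}, the double integral $\int_{\Gamma\backslash\calH}\sum_{\gamma\in\Gamma_\infty\backslash\Gamma}\abs{\bigl(F(\spacedcdot,w)\big|_m\gamma\bigr)(z)\,f(z)}\,\Im(z)^m\,dv(z)$ is finite; expanding $\Psi_{\Gamma,m,\chi,w}=\sum_{\gamma\in\Gamma_\infty\backslash\Gamma}\overline{\chi(\gamma)}F(\spacedcdot,w)\big|_m\gamma$, applying Fubini, and using $f\big|_m\gamma=\chi(\gamma)f$ together with the $\SL2(\bbR)$-invariance of $v$ then gives $\scal f{\Psi_{\Gamma,m,\chi,w}}_{S_m(\Gamma,\chi)}=\varepsilon_\Gamma^{-1}\int_{\Gamma_\infty\backslash\calH}f(z)\,\overline{F(z,w)}\,\Im(z)^m\,dv(z)$, hence
\[ \ell(s,f)=\frac{(4\pi)^{m-1}}{h^m\Gamma(m-1)}\int_{\Gamma_\infty\backslash\calH}f(z)\,\overline{F(z,m-\overline s)}\,\Im(z)^m\,dv(z). \]

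Next I would prove holomorphy on the strip $\frac m2<\Re(s)<m-1$ by Morera's theorem. For each fixed $z\in\calH$ the integrand is holomorphic in $s$ (e.g.\ because $\overline{F(z,m-\overline s)}=\sum_{n\geq1}n^{m-s-1}e^{-2\pi in\overline z/h}$). To interchange, via Fubini, the $v$-integral with a contour integral over a triangle $\triangle\subseteq\bbC_{\frac m2<\Re(s)<m-1}$, it suffices that $\sup_{s\in\triangle}\int_{\Gamma_\infty\backslash\calH}\abs{f(z)\,F(z,m-\overline s)}\,\Im(z)^m\,dv(z)<\infty$, and since $\abs{f(z)}\Im(z)^m\leq C\,\Im(z)^{\frac m2}$ this reduces to $\sup_{s\in\triangle}\int_{\Gamma_\infty\backslash\calH}\abs{F(z,m-\overline s)}\,\Im(z)^{\frac m2}\,dv(z)<\infty$. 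This is the uniform-in-parameter version of the estimate of Lemma \ref{lem:f175}: the set $\{m-\overline s:s\in\triangle\}$ is a compact subset of the open strip $1<\Re(w)<\frac m2$, the majorant in \eqref{eq:f173} for $I_2$ converges uniformly there, and every factor occurring in the bound \eqref{eq:l016} for $I_1$ is a continuous function of $\Re(w)$ and $\Im(w)$, hence bounded on that compact set, which stays away from the lines $\Re(w)=1$ and $\Re(w)=\frac m2$. Morera then shows $\ell(\spacedcdot,f)$ is holomorphic on $\bbC_{\frac m2<\Re(s)<m-1}$. On the complementary region $\bbC_{\Re(s)>\frac m2+1}$ one has $\ell(s,f)=L(s,f)$ by \eqref{eq:011}, and $L(\spacedcdot,f)$ is holomorphic there; since $\ell$ is everywhere given by the single formula \eqref{eq:l014}, there is nothing to reconcile on the overlap, and $\ell(\spacedcdot,f)$ is well-defined and holomorphic on $\bbC_{\Re(s)>\frac m2+1}\cup\bbC_{\frac m2<\Re(s)<m-1}$.

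Finally, if $m>4$ then $\frac m2+1<m-1$, so every $s$ satisfies $\Re(s)>\frac m2+1$ or $\Re(s)<m-1$; thus the above union equals the connected half-plane $\bbC_{\Re(s)>\frac m2}$. As $\ell(\spacedcdot,f)$ is holomorphic there and coincides with $L(\spacedcdot,f)$ on the nonempty open set $\bbC_{\Re(s)>\frac m2+1}$, it is the analytic continuation of $L(\spacedcdot,f)$ to $\bbC_{\Re(s)>\frac m2}$.

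I expect the middle step to be the only genuine obstacle, namely the uniform bound $\sup_{s\in\triangle}\int_{\Gamma_\infty\backslash\calH}\abs{F(z,m-\overline s)}\Im(z)^{\frac m2}\,dv(z)<\infty$ over compacta in the strip. Pointwise finiteness is exactly Lemma \ref{lem:f175}, but the naive Fourier-series bound $\abs{F(z,w)}\leq\sum_{n\geq1}n^{\Re(w)-1}e^{-2\pi n\Im(z)/h}$ breaks down near $\Im(z)=0$ once $\Re(w)\geq\frac m2-1$, so the uniformity has to be read off from the Lipschitz-summation estimate of $I_1$ in that proof; this is a routine but unavoidable inspection. (Equivalently, one could organize the argument around the statement that $w\mapsto\Psi_{\Gamma,m,\chi,w}$ is a holomorphic $S_m(\Gamma,\chi)$-valued function on each of the two regions, which again rests on the same uniform estimates together with finite-dimensionality of $S_m(\Gamma,\chi)$.)
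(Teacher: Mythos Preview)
Your proposal is correct and follows essentially the same route as the paper: unfold the Petersson pairing to the integral over $\Gamma_\infty\backslash\calH$ (the paper's \eqref{eq:t013}), then prove holomorphy in the strip by showing that $\int_{\Gamma_\infty\backslash\calH}\abs{F(z,m-\overline s)}\Im(z)^{\frac m2}\,dv(z)$ is bounded on compacta via the estimates \eqref{eq:f173} and \eqref{eq:l016} from the proof of Lemma \ref{lem:f175}. The only cosmetic difference is that the paper invokes \cite[Lemma 6.1.5]{miyake} in place of your explicit Morera/Fubini argument, which amounts to the same thing.
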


\begin{proof}
	The function $ \ell(\spacedcdot,f) $ is well-defined by Lemma \ref{lem:f175}. Its restriction to the half-plane $ \bbC_{\Re(s)>\frac m2+1} $ is holomorphic since it coincides with $ L(\spacedcdot,f) $ by \eqref{eq:011}.
	
	Let us prove that $ \ell(\spacedcdot,f) $ is holomorphic on $ \bbC_{\frac m2<\Re(s)<m-1} $. We have
	\begin{align}
		\ell(s,f)&\overset{\eqref{eq:l014}}=\frac{(4\pi)^{m-1}}{h^m\Gamma(m-1)}\int_{\Gamma\backslash\calH}f(z)\sum_{\gamma\in\Gamma_\infty\backslash\Gamma}\chi(\gamma)\overline{\left(F\left(\spacedcdot,m-\overline s\right)\big|_m\gamma\right)(z)}\Im(z)^m\,dv(z)\nonumber\\
		&\underset{\eqref{eq:t011}}{\overset{\eqref{eq:t012}}=}\frac{(4\pi)^{m-1}}{h^m\Gamma(m-1)}\int_{\Gamma\backslash\calH}\sum_{\gamma\in\Gamma_\infty\backslash\Gamma}f(\gamma.z)\overline{F\left(\gamma.z,m-\overline s\right)}\Im(\gamma.z)^m\,dv(z)\nonumber\\
		&=\frac{(4\pi)^{m-1}}{h^m\Gamma(m-1)}\int_{\Gamma_\infty\backslash\calH}f(z)\overline{F\left(z,m-\overline s\right)}\Im(z)^m\,dv(z).\label{eq:t013}
	\end{align}
	Since the integrand in \eqref{eq:t013} is holomorphic in $ s $ for every fixed $ z\in\calH $, by \cite[Lemma 6.1.5]{miyake} it suffices to prove that 
	\begin{equation}\label{eq:t015}
		\int_{\Gamma_\infty\backslash\calH}\abs{f(z)\overline{F\left(z,m-\overline s\right)}\Im(z)^m}\,dv(z)
	\end{equation}
	is bounded, as a function of $ s $, on every compact subset of $ \bbC_{\frac m2<\Re(s)<m-1} $. This is true because \eqref{eq:t015} is dominated by
	\begin{equation}\label{eq:t016}
		\left(\sup_{z\in\calH}\abs{f(z)\Im(z)^{\frac m2}}\right)\cdot\int_{\Gamma_\infty\backslash\calH}\abs{F\left(z,m-\overline s\right)\Im(z)^{\frac m2}}\,dv(z),
	\end{equation}
	which is bounded, as a function of $ s $, on every compact subset of $ \bbC_{\frac m2<\Re(s)<m-1} $ since $ \sup_{z\in\calH}\abs{f(z)\Im(z)^{\frac m2}} $ is finite by a well-known argument (see \cite[Theorem 2.1.5]{miyake}), while the integral in \eqref{eq:t016} is dominated by the sum of \eqref{eq:f173} and \eqref{eq:l016} by the proof of Lemma \ref{lem:f175}.
	
	Let us prove the last claim of the theorem. If $ m>4 $, then $ m-1>\frac m2+1 $, hence the domain of $ \ell(\spacedcdot, f) $ is the half-plane $ \bbC_{\Re(s)>\frac m2} $. Since by \eqref{eq:011} $ \ell(\spacedcdot,f) $ coincides with $ L(\spacedcdot,f) $ on the half-plane $ \bbC_{\Re(s)>\frac m2+1} $, the claim follows.
\end{proof}

\section{Non-vanishing of $ L $-functions}\label{sec:f147}

Let $ m $, $ \Gamma $, $ \chi $, and $ h $ satisfy the assumptions of the first paragraph of Section \ref{sec:f146}. Let 
\[ N:=\inf\left\{\abs c:\begin{pmatrix}a&b\\c&d\end{pmatrix}\in P(\Gamma\setminus\Gamma_\infty)\right\}. \]	
It follows from \cite[Lemma 1.7.3]{miyake} that
\begin{equation}\label{eq:f121}
	hN\geq1.
\end{equation}
In particular, $ N>0 $.

In the proof of the following theorem, we will use the notion of the median, $ \mathrm M_{\Gamma(a,b)} $, of the gamma distribution with parameters $ a,b\in\bbR_{>0} $. It is defined as the unique $ \mathrm M_{\Gamma(a,b)}\in\bbR_{>0} $ such that
\[ \int_0^{\mathrm M_{\Gamma(a,b)}}x^{a-1}e^{-\frac xb}\,dx=\int_{\mathrm M_{\Gamma(a,b)}}^\infty x^{a-1}e^{-\frac xb}\,dx. \] 
By \cite[Theorem 1]{chen1986bounds}, we have
\begin{equation}\label{eq:f119}
	a-\frac13<\mathrm M_{\Gamma(a,1)}<a,\qquad a\in\bbR_{>0}.
\end{equation}

\begin{theorem}\label{thm:f121}
	Suppose that
	\begin{equation}\label{eq:l055}
		m\geq\frac{4\pi}{Nh}+\frac83. 
	\end{equation}
	Let $ 1<\Re(s)<\frac m2 $. If
	\begin{equation}\label{eq:l059}
		e^{\frac\pi2\abs{\Im(s)}}\Gamma\left(\frac{\Re(s)+1}2\right)\Gamma\left(\frac{\Re(s)-1}2\right)\frac{2^{\frac m2-1}}{\Gamma\left(\frac m2-1\right)}\frac{\left(\frac{\pi}{Nh}\right)^{\frac m2-\Re(s)}}{\frac m2-\Re(s)}\leq\pi,
	\end{equation}
	then $ \Psi_{\Gamma,m,\chi,s}\not\equiv0 $.
\end{theorem}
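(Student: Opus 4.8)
The plan is to apply the non-vanishing criterion, Theorem \ref{thm:f098}, to $f := F(\spacedcdot, s)$ with $\Lambda := \Gamma_\infty$, choosing the test function $\babs{\spacedcdot} := \abs{\spacedcdot}$ and a suitable Borel set $S \subseteq \calH$. First I would check the hypotheses of Lemma \ref{lem:t007}: (f1) holds by \eqref{eq:l050}, and (f2) holds because $1 < \Re(s) < \frac m2$ is exactly the range in which Lemma \ref{lem:f175} established $\int_{\Gamma_\infty\backslash\calH} \abs{F(z,s)\Im(z)^{m/2}}\,dv(z) < \infty$. The condition \eqref{eq:f081} that $\chi|_{Z(\Gamma)} = \chi_m|_{Z(\Gamma)}$ follows from \eqref{eq:l051}: since $Z(\Gamma) \subseteq \Gamma_\infty$, for $\delta \in Z(\Gamma)$ we have $\chi(\delta) = \eta_\delta^{-2m} = \chi_m(\delta)$ by \eqref{eq:t014}. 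So the whole burden is to produce a set $S$ with properties (S1) and (S2).

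For (S1) I would take $S$ to be a vertical strip truncated below: $S := [0,h[ \times\, ]T,\infty[$ for a threshold $T$ to be chosen, using \cite[Corollary 1.7.5]{miyake} (as in the proof of Lemma \ref{lem:t011}, via the constant $N$) to guarantee that for $T$ large enough relative to $1/N$ distinct points of $S$ are $\Gamma$-inequivalent; the bound $hN \geq 1$ from \eqref{eq:f121} is what makes the natural choice $T$ of order $\frac{1}{Nh}$ (times a constant) work. Since $\Gamma_\infty$ acts by the horizontal translation $z \mapsto z + h$ (up to center), $\Gamma_\infty.S$ is the full strip $\bbR \times\, ]T,\infty[$ modulo translations, so $\Lambda\backslash\Lambda.S$ is represented by $[0,h[\times\,]T,\infty[$ and $\Lambda\backslash(\Lambda.S)^c$ by $[0,h[\times\,]0,T]$. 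Property (S2) then becomes the explicit inequality
\begin{equation}\label{eq:prop:S2}
	\int_0^h\int_T^\infty \abs{F(x+iy,s)}\,y^{m/2}\,\frac{dy\,dx}{y^2} > \int_0^h\int_0^T \abs{F(x+iy,s)}\,y^{m/2}\,\frac{dy\,dx}{y^2}.
\end{equation}

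To make \eqref{eq:prop:S2} tractable I would bound the two sides asymmetrically. For the right-hand side (the "bad" region $y \leq T$), apply the Lipschitz summation formula \eqref{eq:l011} and the computation of $I_1$ in the proof of Lemma \ref{lem:f175}, which gives, with $\varepsilon$ there replaced by $T$, the upper bound
\[
	\Gamma(\Re(s))\left(\frac{h}{2\pi}\right)^{\Re(s)} e^{\frac\pi2\abs{\Im(s)}}\,\frac{\sqrt\pi\,\Gamma\!\left(\frac{\Re(s)-1}2\right)}{\Gamma\!\left(\frac{\Re(s)}2\right)}\cdot\frac{T^{m/2-\Re(s)}}{\frac m2-\Re(s)}.
\]
For the left-hand side (the "good" region $y \geq T$) I would NOT expand via Lipschitz summation but instead keep $F(z,s) = \sum_{n\geq1} n^{s-1}e^{2\pi i n z/h}$ and extract a lower bound from a dominant term; the cleanest route is to bound $\abs{F}$ below by $\abs{n=1\text{ term}} - \sum_{n\geq 2}\abs{\text{term}}$, giving $\int_0^h\int_T^\infty(\cdots) \geq h\int_T^\infty e^{-2\pi y/h}y^{m/2-2}\,dy - (\text{small tail})$, and then recognize $h\int_0^\infty e^{-2\pi y/h}y^{m/2-2}\,dy = \frac{h^{m/2}}{(2\pi)^{m/2-1}}\Gamma(\frac m2-1)$ with the truncation at $T$ controlled using that $T$ is below the median $\mathrm M_{\Gamma(m/2-1,\,h/2\pi)}$ of the corresponding gamma density. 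This is where \eqref{eq:f119} enters: it locates the median within $\frac13$ of $\frac m2 - 1$, so the hypothesis \eqref{eq:l055}, namely $m \geq \frac{4\pi}{Nh} + \frac83$, is precisely engineered so that the admissible threshold $T \sim \frac{c}{Nh}$ sits below that median, forcing $\int_T^\infty \geq \int_0^T$ for the $n=1$ density and leaving room to absorb the $n\geq 2$ tails. Combining the lower bound for the left side with the Lipschitz upper bound for the right side, the inequality \eqref{eq:prop:S2} reduces — after using the duplication formula $\Gamma(\Re(s))/\Gamma(\frac{\Re(s)}2) = 2^{\Re(s)-1}\Gamma(\frac{\Re(s)+1}2)/\sqrt\pi$ to rewrite the constant — to exactly the hypothesis \eqref{eq:l059} (with $T$ chosen as $\frac{h}{2} \cdot \frac{1}{\text{something}}$, matching the $\left(\frac{\pi}{Nh}\right)^{m/2-\Re(s)}$ factor). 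Then Theorem \ref{thm:f098} yields $\int_{\Gamma\backslash\calH}\abs{(P_{\Gamma_\infty\backslash\Gamma,\chi}F(\spacedcdot,s))(z)\Im(z)^{m/2}}\,dv(z) > 0$, i.e. $\Psi_{\Gamma,m,\chi,s} \not\equiv 0$.

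The main obstacle will be the bookkeeping in the lower bound on the good region: controlling the $n \geq 2$ tail of $F$ on $y \geq T$ without destroying the $n=1$ main term, and simultaneously arranging the constants so that the threshold $T$ is admissible for (S1) (hence $\geq$ something like $\frac{1}{2N}$, using $hN\geq 1$) yet still below the gamma median (hence $\lesssim \frac{h}{2\pi}(\frac m2 - 1)$). Reconciling these two constraints is exactly what forces the quantitative hypothesis \eqref{eq:l055}, and threading the gamma-function identities so the final condition comes out as the clean inequality \eqref{eq:l059} rather than something messier is the delicate part.
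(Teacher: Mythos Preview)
Your overall strategy is right, and the upper bound on the bad region via the Lipschitz formula (with $\varepsilon$ replaced by the threshold) is exactly what the paper does. The gap is in your lower bound on the good region. Bounding $\abs{F}$ pointwise by $\abs{\text{first term}}-\sum_{n\geq2}\abs{\cdots}$ leaves a genuine positive tail that degrades the constant, so you will not recover the stated condition \eqref{eq:l059}; at best you would prove the theorem under a strictly stronger hypothesis. The paper avoids this tail entirely by a cleaner device: factor out $e^{2\pi iz/h}$ to write $\abs{F(x+iy,s)}=e^{-2\pi y/h}\bigl|\sum_{n\geq1}n^{s-1}e^{2\pi i(n-1)(x+iy)/h}\bigr|$, and then use the trivial inequality $\int_0^h\abs{g(x)}\,dx\geq\bigl|\int_0^h g(x)\,dx\bigr|$ for the inner $x$-integral. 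By orthogonality of $e^{2\pi i(n-1)x/h}$ on $[0,h]$, only the $n=1$ term survives, yielding the clean lower bound $h\int_T^\infty e^{-2\pi y/h}y^{m/2-2}\,dy$ with \emph{no} tail to absorb. Together with the median estimate \eqref{eq:f119} this gives the lower bound $\bigl(\tfrac{h}{2\pi}\bigr)^{m/2}\pi\,\Gamma\bigl(\tfrac m2-1\bigr)$, and comparison with the Lipschitz upper bound (after Legendre duplication) is then exactly \eqref{eq:l059}.

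Two smaller points. First, the threshold is simply $T=\tfrac1N$, not something of order $\tfrac1{Nh}$: for (S1), if $\gamma\notin\Gamma_\infty$ has lower-left entry $c$ with $\abs c\geq N$, then $\Im(\gamma.z)\leq 1/(c^2\Im(z))<1/N$ whenever $\Im(z)>1/N$, so no appeal to \cite{miyake} or to $hN\geq1$ is needed here (the latter is used only in the corollaries). Second, with $T=\tfrac1N$ the factor $(1/N)^{m/2-\Re(s)}$ from the $I_1$ bound combines with the ratio of $(h/2\pi)^{m/2}$ and $(h/\pi)^{\Re(s)}$ to produce the $\bigl(\tfrac{\pi}{Nh}\bigr)^{m/2-\Re(s)}$ in \eqref{eq:l059}, while the requirement $\tfrac{2\pi}{Nh}<\mathrm M_{\Gamma(m/2-1,1)}$ for the median step is precisely \eqref{eq:l055}.
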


\begin{proof}
	We will prove the theorem by applying Theorem \ref{thm:f098} to the Poincar\'e series $ \Psi_{\Gamma,m,\chi,s}= P_{\Gamma_\infty\backslash\Gamma,\chi}\left(F\left(\spacedcdot,s\right)\right) $ with $ \babs\spacedcdot=\abs\spacedcdot $. Let us prove that the assumptions of Theorem \ref{thm:f098} are satisfied. The function $ F(\spacedcdot,s) $ satisfies the condition (f1) for $ \Lambda=\Gamma_\infty $ since it is $ h $-periodic and \eqref{eq:l051} holds, and by \eqref{eq:f116} it satisfies (f2). The condition \eqref{eq:f081} is satisfied by \eqref{eq:l051} and \eqref{eq:t014}. 
	
	Next, we define
	\[ S:=[0,h[\times\left]\frac1N,\infty\right[. \]
	Let us prove that $ S $ has the property (S1) of Theorem \ref{thm:f098}.
	Suppose that $ z\in S $ and $ \gamma=\left(\begin{pmatrix}a&b\\c&d\end{pmatrix},\eta_\gamma\right)\in\Gamma $ are such that $ \gamma.z\in S $. We need to prove that $ \gamma.z=z $. First we note that $ c=0 $, otherwise we would have
	\[ \frac1N<\Im(\gamma.z)\overset{\eqref{eq:t011}}=\frac{\Im(z)}{(c\,\Re(z)+d)^2+\left(c\,\Im(z)\right)^2}\leq\frac{\Im(z)}{\left(c\,\Im(z)\right)^2}=\frac1{c^2\Im(z)}<\frac1{N^2\frac1N}=\frac1N. \]
	Thus, $ \gamma\in\Gamma_\infty $, hence by \eqref{eq:t017} $ \gamma.z=z+kh $ for some $ k\in\bbZ $. Since $ \Re(z),\Re(\gamma.z)\in [0,h[ $, it follows that $ k=0 $, hence $ \gamma.z=z $.
	
	It remains to prove that $ S $ has the property (S2), i.e., writing $ (\Gamma_\infty.S)^c:=\calH\setminus\Gamma_\infty.S $, to prove the inequality
	\begin{equation}\label{eq:l053}
		\int_{\Gamma_\infty\backslash\Gamma_\infty.S}\abs{F(z,s)\Im(z)^{\frac m2}}\,dv(z)>\int_{\Gamma_\infty\backslash(\Gamma_\infty.S)^c}\abs{F(z,s)\Im(z)^{\frac m2}}\,dv(z).
	\end{equation}
	
	We have
	\begin{align*}
		&\int_{\Gamma_\infty\backslash\Gamma_\infty.S}\abs{F(z,s)\Im(z)^{\frac m2}}\,dv(z)\\
		&\overset{\textup{(S1)}}=\int_{S}\abs{\sum_{n=1}^\infty n^{s-1}e^{2\pi in\frac zh}\Im(z)^{\frac m2}}\,dv(z)\\
		&\overset{\phantom{\textup{(S1)}}}=\int_{\frac1N}^\infty\int_0^h\abs{\sum_{n=1}^\infty n^{s-1}e^{2\pi i(n-1)\frac{x+iy}h}}\,dx\,e^{-2\pi\frac yh}y^{\frac m2-2}\,dy\\
		&\overset{\phantom{\textup{(S1)}}}\geq\int_{\frac1N}^\infty\abs{\int_0^h\sum_{n=1}^\infty n^{s-1}e^{2\pi i(n-1)\frac{x+iy}h}\,dx}\,e^{-2\pi\frac yh}y^{\frac m2-2}\,dy\\
		&\overset{\phantom{\textup{(S1)}}}=\int_{\frac1N}^\infty\abs{\sum_{n=1}^\infty n^{s-1}\int_0^h e^{2\pi i(n-1)\frac{x+iy}h}\,dx}\,e^{-2\pi\frac yh}y^{\frac m2-2}\,dy\\
		&\overset{\phantom{\textup{(S1)}}}=h\left(\frac h{2\pi}\right)^{\frac m2-1}\int_{\frac{2\pi}{Nh}}^\infty e^{-y}y^{\frac m2-2}\,dy.
	\end{align*}
	Since
	\[ \frac{2\pi}{Nh}\overset{\eqref{eq:l055}}\leq\frac m2-\frac43\overset{\eqref{eq:f119}}<M_{\Gamma\left(\frac m2-1,1\right)}, \]
	it follows that
	\begin{align}
		\int_{\Gamma_\infty\backslash\Gamma_\infty.S}\abs{F(z,s)\Im(z)^{\frac m2}}\,dv(z)
		&>h\left(\frac h{2\pi}\right)^{\frac m2-1}\frac12\int_0^\infty e^{-y}y^{\frac m2-2}\,dy\nonumber\\
		&=\left(\frac h{2\pi}\right)^{\frac m2}\pi\,\Gamma\left(\frac m2-1\right).\label{eq:l058}
	\end{align}
	
	On the other hand, since $ [0,h[\times\left]0,\infty\right[ $ is a fundamental domain for the action of $ \Gamma_\infty $ on $ \calH $, we have
	\begin{align}
		\multispan2{$\displaystyle \int_{\Gamma_\infty\backslash(\Gamma_\infty.S)^c}\abs{F(z,s)\Im(z)^{\frac m2}}\,dv(z)=\int_{[0,h[\times\left]0,\frac1N\right]}\abs{F(z,s)\Im(z)^{\frac m2}}\,dv(z)$\hfil}\nonumber\\
		\quad&\overset{\eqref{eq:l016}}\leq e^{\frac\pi2\abs{\Im(s)}}\left(\frac h{2\pi}\right)^{\Re(s)}\frac{\sqrt\pi\,\Gamma(\Re(s))}{\Gamma\left(\frac{\Re(s)}2\right)}\,\frac{\Gamma\left(\frac{\Re(s)-1}2\right)\left(\frac1N\right)^{\frac m2-\Re(s)}}{\frac m2-\Re(s)}\nonumber\\
		&=\frac{e^{\frac\pi2\abs{\Im(s)}}}2\left(\frac{h}{\pi}\right)^{\Re(s)}\frac{\Gamma\left(\frac{\Re(s)+1}2\right)\Gamma\left(\frac{\Re(s)-1}2\right)\left(\frac1N\right)^{\frac m2-\Re(s)}}{\frac m2-\Re(s)},\label{eq:f120}
	\end{align}
	where the last equality is obtained by applying the Legendre duplication formula \cite[19.(2)]{rainville1960special}:
	\[ \frac{\sqrt\pi\,\Gamma(2z)}{2^{2z}\Gamma(z)}=\frac12\Gamma\left(z+\frac12\right),\qquad z\in\bbC\setminus\frac12\bbZ_{\leq0}. \]
	Inequalities \eqref{eq:l058} and \eqref{eq:f120} imply that \eqref{eq:l053} holds if we have
	\[ \left(\frac h{2\pi}\right)^{\frac m2}\pi\Gamma\left(\frac m2-1\right)
	\geq\frac{e^{\frac\pi2\abs{\Im(s)}}}2\left(\frac{h}{\pi}\right)^{\Re(s)}\frac{\Gamma\left(\frac{\Re(s)+1}2\right)\Gamma\left(\frac{\Re(s)-1}2\right)\left(\frac1N\right)^{\frac m2-\Re(s)}}{\frac m2-\Re(s)}, \]
	and this inequality is obviously equivalent to \eqref{eq:l059}.
\end{proof}

\begin{corollary}\label{cor:l054}
	Suppose that $ m\in\frac92+\bbZ_{\geq0} $. Let $ \frac m2<\Re(s)<m-1 $. Suppose that
	\[  \frac{Nh}\pi\geq\max\left\{\frac{4}{m-\frac83},\left(\frac{e^{\frac\pi2\abs{\Im(s)}}\Gamma\left(\frac{m-\Re(s)+1}2\right)\Gamma\left(\frac{m-\Re(s)-1}2\right)2^{\frac m2-1}}{\pi\Gamma\left(\frac m2-1\right)\left(\Re(s)-\frac m2\right)}\right)^{\frac1{\Re(s)-\frac m2}}\right\}. \]
	Then,
	\[ L\left(s,\Psi_{\Gamma,m,\chi,m-\overline s}\right)>0. \]
\end{corollary}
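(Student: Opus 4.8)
The plan is to reduce the claimed positivity to the non-vanishing statement of Theorem \ref{thm:f121}. Since $ m\in\frac92+\bbZ_{\geq0} $ forces $ m>4 $, Theorem \ref{thm:f118} tells us that $ L(\spacedcdot,f)=\ell(\spacedcdot,f) $ throughout $ \bbC_{\frac m2<\Re(s)<m-1} $, so that, taking $ f=\Psi_{\Gamma,m,\chi,m-\overline s} $ (which lies in $ S_m(\Gamma,\chi) $ by Lemma \ref{lem:f175}, because $ 1<\Re\!\left(m-\overline s\right)=m-\Re(s)<\frac m2 $), the defining formula \eqref{eq:l014} gives
\[ L\!\left(s,\Psi_{\Gamma,m,\chi,m-\overline s}\right)=\frac{\varepsilon_\Gamma(4\pi)^{m-1}}{h^m\Gamma(m-1)}\,\scal{\Psi_{\Gamma,m,\chi,m-\overline s}}{\Psi_{\Gamma,m,\chi,m-\overline s}}_{S_m(\Gamma,\chi)}=\frac{\varepsilon_\Gamma(4\pi)^{m-1}}{h^m\Gamma(m-1)}\,\norm{\Psi_{\Gamma,m,\chi,m-\overline s}}_{S_m(\Gamma,\chi)}^2. \]
The prefactor is a positive real number, so it remains only to show that $ \Psi_{\Gamma,m,\chi,m-\overline s}\not\equiv0 $.

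For this I would apply Theorem \ref{thm:f121} with $ s $ replaced by $ s':=m-\overline s $. Then $ \Re(s')=m-\Re(s)\in\left]1,\frac m2\right[ $ and $ \abs{\Im(s')}=\abs{\Im(s)} $, so the range restriction in Theorem \ref{thm:f121} is met, and it remains to verify its two hypotheses \eqref{eq:l055} and \eqref{eq:l059} for $ s' $. Hypothesis \eqref{eq:l055}, namely $ m\geq\frac{4\pi}{Nh}+\frac83 $, is equivalent (using $ m>\frac83 $, which holds since $ m\geq\frac92 $) to $ \frac{Nh}\pi\geq\frac4{m-\frac83} $, the first term in the maximum appearing in the corollary's assumption.

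Hypothesis \eqref{eq:l059}, rewritten for $ s' $ by substituting $ \Re(s')=m-\Re(s) $, $ \abs{\Im(s')}=\abs{\Im(s)} $ and $ \frac m2-\Re(s')=\Re(s)-\frac m2 $, becomes
\[ e^{\frac\pi2\abs{\Im(s)}}\,\Gamma\!\left(\tfrac{m-\Re(s)+1}2\right)\Gamma\!\left(\tfrac{m-\Re(s)-1}2\right)\frac{2^{\frac m2-1}}{\Gamma\!\left(\frac m2-1\right)}\,\frac{\left(\frac{\pi}{Nh}\right)^{\Re(s)-\frac m2}}{\Re(s)-\frac m2}\leq\pi. \]
Dividing by $ \pi $, solving for $ \left(\frac{\pi}{Nh}\right)^{\Re(s)-\frac m2} $, and raising both sides to the power $ \frac1{\Re(s)-\frac m2}>0 $ turns this into exactly $ \frac{Nh}\pi\geq\bigl(\cdots\bigr)^{1/(\Re(s)-\frac m2)} $, the second term in the maximum. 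Thus the corollary's hypothesis $ \frac{Nh}\pi\geq\max\{\cdots\} $ secures both \eqref{eq:l055} and \eqref{eq:l059} for $ s' $, Theorem \ref{thm:f121} yields $ \Psi_{\Gamma,m,\chi,m-\overline s}=\Psi_{\Gamma,m,\chi,s'}\not\equiv0 $, and the displayed identity gives $ L\!\left(s,\Psi_{\Gamma,m,\chi,m-\overline s}\right)>0 $. The only delicate part of the argument is the algebraic bookkeeping in this last inversion of the inequality; all the analytic content is already contained in Theorems \ref{thm:f118} and \ref{thm:f121}.
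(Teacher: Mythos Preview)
Your proof is correct and follows essentially the same approach as the paper's: both deduce $\Psi_{\Gamma,m,\chi,m-\overline s}\not\equiv0$ from Theorem~\ref{thm:f121} (applied with $s'=m-\overline s$), then use \eqref{eq:l014} to identify $L(s,\Psi_{\Gamma,m,\chi,m-\overline s})$ with a positive multiple of $\norm{\Psi_{\Gamma,m,\chi,m-\overline s}}^2$ via Theorem~\ref{thm:f118}. You simply spell out in more detail than the paper the verification that the corollary's hypothesis on $\frac{Nh}{\pi}$ is equivalent to the pair \eqref{eq:l055}--\eqref{eq:l059} for $s'$.
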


\begin{proof}
	By Theorem \ref{thm:f121} $ \Psi_{\Gamma,m,\chi,m-\overline s}\not\equiv0 $, hence 
	\[ 0<\scal{\Psi_{\Gamma,m,\chi,m-\overline s}}{\Psi_{\Gamma,m,\chi,m-\overline s}}_{S_m(\Gamma,\chi)}\overset{\eqref{eq:l014}}=\frac{h^m\Gamma(m-1)}{\varepsilon_\Gamma(4\pi)^{m-1}}\ell(s,\Psi_{\Gamma,m,\chi,m-\overline s}). \]
	Thus, $ \ell(s,\Psi_{\Gamma,m,\chi,m-\overline s})>0 $, i.e., by Theorem \ref{thm:f118}, $ L(s,\Psi_{\Gamma,m,\chi,m-\overline s})>0 $.
\end{proof}

\begin{corollary}\label{cor:l055}
	Let $ \varepsilon\in\bbR_{>\frac12} $, $ \nu\in\bbR_{>\varepsilon} $, and $ \eta\in\bbR_{>0} $. For $ m\in\frac92+\bbZ_{\geq0} $, we define
	\[ C_m:=\left[\frac m2+\varepsilon,\frac m2+\nu\right]\times[-\eta,\eta]\subseteq\bbC. \]
	There exists $ m_0\in \frac92+\bbZ_{\geq0} $ such that for all $ m\in m_0+\bbZ_{\geq0} $ and $ s\in C_m $, for every discrete subgroup $ \Gamma $ of finite covolume in $ \Met $ such that $ \infty $ is a cusp of $ P(\Gamma) $ and for every character $ \chi:\Gamma\to\bbC^\times $ of finite order that satisfies $ \eta_\gamma^{-2m}=\chi(\gamma) $ for all $ \gamma\in\Gamma_\infty $, we have
	\[ L\left(s,\Psi_{\Gamma,m,\chi,m-\overline s}\right)>0. \]
\end{corollary}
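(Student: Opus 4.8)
The plan is to derive this from Corollary~\ref{cor:l054}, verifying its hypotheses uniformly once $m$ is large. Fix an admissible pair $(\Gamma,\chi)$ and $s\in C_m$, and set $t:=\Re(s)-\frac m2\in[\varepsilon,\nu]$, so that $m-\Re(s)=\frac m2-t$ and $|\Im(s)|\le\eta$. The condition $\frac m2<\Re(s)<m-1$ of Corollary~\ref{cor:l054} holds as soon as $m>2\nu+2$. For the remaining condition, denote by $B(m,s)$ the expression inside the braces in Corollary~\ref{cor:l054}, so that what is required is $\frac{Nh}{\pi}\ge\max\bigl\{\tfrac4{m-8/3},\,B(m,s)^{1/t}\bigr\}$. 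By \eqref{eq:f121} we have $Nh\ge1$ for every admissible $\Gamma$, so it suffices to bound this maximum by $\frac1\pi$. The first term is $\le\frac1\pi$ precisely when $m\ge4\pi+\tfrac83$, and since $t\ge\varepsilon>0$ the inequality $B(m,s)^{1/t}\le\frac1\pi$ is equivalent to $B(m,s)\le\pi^{-t}$, which follows once $B(m,s)\le\pi^{-\nu}$.

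So everything reduces to showing that $B(m,s)\to0$ as $m\to\infty$, uniformly over $s\in C_m$. The factors $e^{\frac\pi2|\Im(s)|}\le e^{\pi\eta/2}$ and $1/t\le1/\varepsilon$ in $B(m,s)$ are bounded independently of $m$ and of the choice of $s\in C_m$, so the whole issue is the quotient $\Gamma\bigl(\tfrac{m-\Re(s)+1}2\bigr)\Gamma\bigl(\tfrac{m-\Re(s)-1}2\bigr)2^{m/2-1}\big/\Gamma\bigl(\tfrac m2-1\bigr)$. Using the Legendre duplication formula recalled in the proof of Theorem~\ref{thm:f121} to rewrite $2^{m/2-1}/\Gamma(\tfrac m2-1)$ as a bounded multiple of $1/\bigl(\Gamma(\tfrac m4)\Gamma(\tfrac m4-\tfrac12)\bigr)$, and noting that $\Gamma\bigl(\tfrac{m-\Re(s)\pm1}2\bigr)$ differ from $\Gamma\bigl(\tfrac m4-\tfrac t2\bigr)$ by bounded factors, this quotient equals, up to bounded multiplicative constants, $\Gamma\bigl(\tfrac m4-\tfrac t2\bigr)^2\big/\bigl(\Gamma(\tfrac m4)\Gamma(\tfrac m4-\tfrac12)\bigr)$; the elementary asymptotics $\Gamma(x-c)/\Gamma(x)=O(x^{-c})$ then give a bound of the form $O\bigl(m^{1/2-t}\bigr)$, uniform over $t\in[\varepsilon,\nu]$. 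Equivalently, one applies Stirling's approximation directly to the three gamma factors and finds $\log B(m,s)=(\tfrac12-t)\log\tfrac m2+O(1)$ with the $O(1)$ uniform in $s\in C_m$. In either form, since $t\ge\varepsilon>\tfrac12$ the exponent $\tfrac12-t$ is bounded above by the negative constant $\tfrac12-\varepsilon$, so $B(m,s)\to0$ uniformly; in particular there is $m_1$ with $B(m,s)\le\pi^{-\nu}$ for all $m\ge m_1$ and all $s\in C_m$.

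Finally I would take $m_0$ to be the least element of $\frac92+\bbZ_{\geq0}$ exceeding $\max\{4\pi+\tfrac83,\,2\nu+2,\,m_1\}$. For every $m\in m_0+\bbZ_{\geq0}$ (hence $m\in\frac92+\bbZ_{\geq0}$), every $s\in C_m$, and every admissible $(\Gamma,\chi)$, all hypotheses of Corollary~\ref{cor:l054} are met, whence $L\bigl(s,\Psi_{\Gamma,m,\chi,m-\overline s}\bigr)>0$. The one genuinely delicate point is the uniform gamma-function estimate of the middle paragraph, and it is exactly there that the hypothesis $\varepsilon>\tfrac12$ enters: if $\varepsilon\le\tfrac12$ the exponent $\tfrac12-t$ could be nonnegative and $B(m,s)$ would not tend to $0$, so the argument would break down.
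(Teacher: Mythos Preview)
Your proof is correct and follows essentially the same route as the paper's: reduce to Corollary~\ref{cor:l054} via the uniform bound $Nh\ge1$ from \eqref{eq:f121}, then use the Legendre duplication formula and standard gamma asymptotics to show the relevant quotient is $O\bigl(m^{1/2-\varepsilon}\bigr)$. One small imprecision worth fixing: the individual factors $\Gamma\bigl(\tfrac m4-\tfrac t2\pm\tfrac12\bigr)$ do \emph{not} differ from $\Gamma\bigl(\tfrac m4-\tfrac t2\bigr)$ by bounded factors (each ratio is $\asymp m^{\pm1/2}$), though their \emph{product} does, so your stated conclusion about the quotient is correct and your Stirling alternative sidesteps the slip entirely.
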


\begin{proof}
	If $ m>2\nu+2 $, then $ \frac m2+\nu<m-1 $, hence $ C_m\subseteq\bbC_{\frac m2<\Re(s)<m-1} $. Thus, by Corollary \ref{cor:l054} and by \eqref{eq:f121} it suffices to prove that there exists $ m_0\in\frac92+\bbZ_{\geq0} $ such that for all $ m\in m_0+\bbZ_{\geq0} $ and $ s\in C_m $, we have
	\begin{equation}\label{eq:l060}
		\frac1\pi\geq\frac4{m-\frac83}
	\end{equation}
	and
	\begin{equation}\label{eq:l061}
		\left(\frac1\pi\right)^{\Re(s)-\frac m2}\\
		\geq\frac{e^{\frac\pi2\abs{\Im(s)}}\Gamma\left(\frac{m-\Re(s)+1}2\right)\Gamma\left(\frac{m-\Re(s)-1}2\right)2^{\frac m2-1}}{\pi\Gamma\left(\frac m2-1\right)\left(\Re(s)-\frac m2\right)}.
	\end{equation}
	The inequality \eqref{eq:l060} obviously holds if $ m\geq\frac83+4\pi $. Next, if $ m\geq2\nu+10 $, then for every $ s\in C_m $, $ \frac{m-\Re(s)+1}2 $ and $ \frac{m-\Re(s)-1}2 $ belong to the interval $ \left[2,\infty\right[ $, on which the gamma function is non-decreasing, hence the right-hand side of \eqref{eq:l061} is bounded from above for all $ s\in C_m $ by
	\[ \frac{e^{\frac\pi2\eta}\Gamma\left(\frac m4+\frac{1-\varepsilon}2\right)\Gamma\left(\frac m4-\frac{1+\varepsilon}2\right)2^{\frac m2-1}}{\pi\Gamma\left(\frac m2-1\right)\varepsilon}. \]
	On the other hand, the left-hand side of \eqref{eq:l061} is, for all $ s\in C_m $, greater than or equal to $ \left(\frac1\pi\right)^\nu $, hence \eqref{eq:l061} holds for all $ s\in C_m $ if
	\[ \left(\frac1\pi\right)^\nu\geq\frac{e^{\frac\pi2\eta}\Gamma\left(\frac m4+\frac{1-\varepsilon}2\right)\Gamma\left(\frac m4-\frac{1+\varepsilon}2\right)2^{\frac m2-1}}{\pi\Gamma\left(\frac m2-1\right)\varepsilon}, \]
	i.e., if we have
	\[ \frac\varepsilon2e^{-\frac\pi2\eta}\pi^{\frac12-\nu}\geq \Gamma\left(\frac m4+\frac{1-\varepsilon}2\right)\Gamma\left(\frac m4-\frac{1+\varepsilon}2\right)\frac{2^{\frac m2-2}}{\sqrt\pi\,\Gamma\left(\frac m2-1\right)}=:R(m). \]
	Thus, to finish the proof of the corollary, it suffices to prove that $ \lim_{m\to\infty}R(m)=0 $. By applying the Legendre duplication formula \cite[19.(2)]{rainville1960special}
	\[ \frac{2^{2z-1}}{\sqrt\pi\,\Gamma(2z)}=\frac1{\Gamma(z)\Gamma\left(z+\frac12\right)},\qquad z\in\bbC\setminus\frac12\bbZ_{\leq0}, \]
	we see that
	\[ R(m)=\frac{\Gamma\left(\frac m4+\frac{1-\varepsilon}2\right)\Gamma\left(\frac m4-\frac{1+\varepsilon}2\right)}{\Gamma\left(\frac m4-\frac12\right)\Gamma\left(\frac m4\right)}. \]
	Let us write $ m=4n+l $ with $ n\in\bbZ_{>0} $ and $ l\in\left\{\frac12,\frac32,\frac52,\frac72\right\} $. We note that $ R\left(4n+l\right) $ equals
	\begin{equation}\label{eq:l062}
		{\frac{\Gamma\left(n+\frac l4+\frac{1-\varepsilon}2\right)}{\Gamma(n)n^{\frac l4+\frac{1-\varepsilon}2}}\cdot\frac{\Gamma\left(n+\frac l4-\frac{1+\varepsilon}2\right)}{\Gamma(n)n^{\frac l4-\frac{1+\varepsilon}2}}}\left({\frac{\Gamma\left(n+\frac l4-\frac12\right)}{\Gamma(n)n^{\frac l4-\frac12}}\cdot\frac{\Gamma\left(n+\frac l4\right)}{\Gamma(n)n^{\frac l4}}}\right)^{-1}n^{\frac12-\varepsilon}.
	\end{equation}
	For every $ l\in\left\{\frac12,\frac32,\frac52,\frac72\right\} $, the right-hand side of \eqref{eq:l062} converges to $ 0 $ as $ n\to\infty $ since by \cite[18.~Lemma 7]{rainville1960special} we have
	\[ \lim_{n\to\infty}\frac{\Gamma(n+s)}{(n-1)!\,n^s}=1,\qquad s\in\bbC. \]
	Thus, $ \lim_{m\to\infty}R(m)=0 $. This proves the corollary.
\end{proof}

\end{document}